\newtheorem{theorem}{Theorem}
\newtheorem{lemma}{Lemma}[section]
\newtheorem{proposition}{Proposition}[section]
\newtheorem{corollary}{Corollary}[section]
\newtheorem{remark}{Remark}[section]
\newtheorem{example}{Example}[section]
\newcommand{\CC}{\mathds{C}}
\newcommand{\logp}{\log^+}
\newcommand{\PP}{\mathds{P}}
\newcommand{\PPn}[1]{\mathds{P}_{#1}}
\newcommand{\RR}{\mathds{R}}
\newcommand{\ZZ}{\mathds{Z}}
\newcommand{\ZZp}{\mathds{\mathds{Z}}_{+}}
\newcommand{\Lp}[1]{\mathbf{L}^p\left(#1 \right)}
\newcommand{\Lq}{\mathbf{L}^q}
\newcommand{\Nzero}{{\mathbf{N}}_z}
\newcommand{\Nsign}{{\mathbf{N}}_{\rm o}}
\newcommand{\reg}{\mathbf{Reg}}
\newcommand{\ch}[1]{\mbox{\bf{Co}}\left(#1 \right)}
\newcommand{\capa}[1]{\mathbf{cap}\left(#1 \right)}
\newcommand{\inter}[1]{\stackrel{\circ}{#1}}
\newcommand{\intch}[1]{\mathop{\stackrel{\circ}{\mbox{\bf Co}}\left(#1 \right)}}
\newcommand{\sopor}[1]{\mathrm{supp}\left(#1 \right)}
\newcommand{\sgn}[1]{{\mathrm{sgn}\/}\left(#1 \right)}
\newcommand{\dgr}[1]{\mathrm{deg}\left(#1\right)}
\newcommand{\dsty}{\displaystyle}
\newcommand{\imm}[1]{{\mathop{Im}\/}\left(#1 \right)}
\newcommand{\ree}[1]{{\mathrm{Re}\/}\left(#1 \right)}
\DeclareRobustCommand{\wlim}{\mathop{\operatorname{w{-}lim}}}
\newcommand{\D}{\mathbf{D}}
    \renewcommand*{\@fnsymbol}[1]{\ensuremath{\ifcase#1\or *\or        \mathsection\or\natural\or\dagger\or \ddagger\or
  \mathparagraph\or **\or \dagger\dagger
        \or \ddagger\ddagger \else\@ctrerr\fi}}
\title{Sobolev extremal polynomials with respect to mutually singular measures}
\author{A. D\'{\i}az Gonzalez\thanks{abdiazgo@math.uc3m.es} \thanks{Supported by the Research Fellowship Program, Ministerio de Econom\'{\i}a, Industria y Competitividad  of Spain,  under grant  MTM2015-65888-C4-2-P.}  \and \and G. L\'{o}pez Lagonasino\thanks{lago@math.uc3m.es} $^{\ddag}$ \and H. Pijeira Cabrera\thanks{hpijeira@math.uc3m.es} \thanks{Research partially supported by  Ministerio de Econom\'{\i}a, Industria y Competitividad  of Spain, under grant  MTM2015-65888-C4-2-P.} \\ \emph{Universidad Carlos III de Madrid,  Spain}}
\date{}
\begin{document}

\maketitle

\begin{abstract}
We consider extremal polynomials with respect to a Sobolev-type $p$-norm, with $1<p<\infty$ and measures supported on compact subsets of the real line. For a wide class of such  extremal polynomials with respect to  mutually singular measures (i.e. supported on disjoint subsets of the real line), it is proved that their critical points are simple and contained in the interior of the convex hull of the support of the measures involved  and the asymptotic  critical point distribution is studied. We also find the $n$th root asymptotic behavior of the corresponding sequence of Sobolev extremal polynomials and their derivatives.

\begin{description}
  \item[{\sl Keywords:}] extremal polynomials, Sobolev  orthogonality, location of zeros, asymptotic behavior
  \item[{\sl AMS Subject classification:}] 42A05, 30C15,  	26C05,  26C10,  33C47
\end{description}
\end{abstract}

\section{Introduction}

Let $\mu_0$  be a positive Borel measure supported on an interval of the real line $\Delta_0$ (which does not reduce to a point).
  For $1 < p < \infty$, we denote by $\Lp{\mu_0}$ the Banach space of all $p$-integrable functions on $\Delta_0$ with respect to the measure $\mu_0$, endowed with the norm
\begin{equation}\label{Norm-Lp}
   \|f\|_{0,p}=\left(\int_{\Delta_0} |f|^p d\mu_0\right)^{1/p}.
\end{equation}

We denote by $\PPn{n}$ the space of polynomials (with complex coefficients) of degree $\leq n\,,$  and  by $\PP^*_n\subset \PPn{n}$ the subset of monic polynomials of exact degree $n$. It is well known  that $\|\cdot\|_{0,p}$ is a \emph{strictly convex norm}, i.e.  the unit ball is a strictly convex set. Then, there exists a unique monic polynomial $P_n \in \PP^*_n$  such that
\begin{equation}\label{ExtremalPol_Lp}
   \|P_n\|_{0,p} \,=\, \min_{Q\in\PP^*_{n}} \|Q\|_{0,p},
\end{equation}
(cf. \cite[Def. 7.5.1 \& Th. 7.5.3]{Dav75}). $P_n$ is called the  \emph{$n$th monic extremal polynomial } relative to $ \|\cdot\|_{0,p}$.

The study of zeros and critical points of extremal polynomials is of {great interest because they can be interpreted in various ways} from the standpoint of physics,  function theory and numerical analysis. It is known that the norm \eqref{Norm-Lp}
is a \emph{Fej\'{e}r norm} (i.e., for distinct  $f,g \in \Lp{\mu_0}$ the condition  $ |f(z)|\leq |g(z)|$ for all $z\in \Delta_0$, with equality
only if $g(z)\equiv 0$,  implies $\|f\|_{0,p}< \|g\|_{0,p}$). Hence, from \emph{Fej\'{e}r's convex hull theorem} \cite[Th. 10.2.2]{Dav75}
we get that the zeros of $P_n$ are simple and lie in $\Delta_0$.

Let us mention a characterization of the solution of the extremal problem  \eqref{ExtremalPol_Lp} (cf. \cite[\S2.2, Ex. 7-h]{BorErd95}). A polynomial $P_n \in \PP^*_n$ is  the  $n$th monic extremal polynomial  in $\Lp{\mu_0}$ if and only if
 for all $Q \in \PP_{n-1}$
\begin{equation}\label{ThChar-Lp}
   \int_{\Delta_0} Q \,\sgn{P_n}\,\left|P_n\right|^{p-1} \;d\mu_0 = 0,
    \text{ where } \sgn{y} = \left\{
             \begin{array}{ll}
              {y}/{|y|}, & \hbox{if }  y\neq 0;\\
               0, & \hbox{if } y=0.
             \end{array}
           \right.
\end{equation}

Hence, if $P_n$ has a zero of multiplicity at least two at $x^*$ then, $\frac{P_n(x)}{(x-x^*)^2}$ is a polynomial of degree $(n-2)$ and we have the contradiction
$$
0 < \int_{\Delta_0}  \;\frac{\left|P_n(x)\right|^{p}}{(x-x^*)^2} \,d\mu_0 (x) = \int_{\Delta_0
} \sgn{P_n(x)}\, \left|P_n(x)\right|^{p-1} \;\frac{P_n(x)}{(x-x^*)^2} \,d\mu_0 (x)=0.
$$
Consequently \emph{all the zeros of $P_n$ are simple}.

Let $\mu_0, \,\mu_1$ be two  positive Borel measures supported on the intervals 
$\Delta_0\subset \RR$ and $\Delta_1\subset \RR$ respectively, where $\Delta_0$ is an non trivial interval. For  
$1<p<\infty$, we consider on the space $\PP$ of polynomials, the Sobolev norm
\begin{equation}\label{SobolevNorm_p}
   \|f\|_{S,p}=\left(\|f\|_{0,p}^p+\|f^{\prime}\|_{1,p}^p\right)^{\frac{1}{p}}=\left(\int_{\Delta_0} |f|^p d\mu_0+\int_{\Delta_1} |f^{\prime}|^p d\mu_1.\right)^{\frac{1}{p}}
\end{equation}
It is not difficult to prove that \eqref{SobolevNorm_p} is a strictly convex norm and, therefore, for each $n\in \ZZp$  there exists a unique monic polynomial $L_n \in \PP^*_n$  such that
\begin{equation}\label{ExtremalPol-Sob}
   \|L_n\|_{S,p} \,=\, \min_{Q\in\PP^*_{n}} \|Q\|_{S,p}.
\end{equation}
The polynomial $L_{n}$ is called the $n$th  \emph{monic extremal polynomial} relative to  $\|\cdot\|_{S,p}$. In Proposition \ref{Propo_ExUn} we give an alternative direct proof of the uniqueness of  $L_{n}$. Obviously, $\|\cdot\|_{S,p}$  is not a Fej\'{e}r norm because we  can construct (piecewise continuously differentiable) functions such that $|f(z)| < |g(z)|, x \in \Delta_0 \cup \Delta_1,$ with $|f^\prime|$ much larger than  $|g^\prime|$,  $\mu_1$ a.e. on $\Delta_1$, so that $\|f\|_{S,p} > \|g\|_{S,p}$. Specific examples are easy to produce.
\begin{example}
Let $\mu_0, \mu_1$ be probability measures supported on $\Delta_0$ and $\Delta_1$, respectively. Take $0 < a < c < 1$ and assume that $\Delta_0\cup \Delta_1 \subset [-a,a]$. If $f(x) = x$ and $g(x) \equiv c$, then $|f(x)| < |g(x)|, x \in \Delta_0\cup \Delta_1$, whereas
\[\|g\|_{S,p}^p = c^p < 1 < \int_{\Delta_0} |x|^p d\mu_0(x) +1 = \|f\|_{S,p}^p.\]
\end{example}

It is well known that in the standard case of extremality with respect to the norm \eqref{Norm-Lp} the   zeros of the extremal polynomial are all in $\Delta_0$ 
and \eqref{Norm-Lp} is a Fej\'{e}r norm, but in the Sobolev case this is not true. P. Althammer shows in an early example (cf. \cite{Alt62}, where $p=2$) that in the Sobolev case the zeros of the orthogonal polynomials may lie outside of 
$\Delta_0\cup \Delta_1$. Other examples of the previous fact can be seen in \cite[\S 2, where $p=2$]{GaKu97}.

However, in the numerical experiments carried out in \cite[\S 2]{GaKu97} (for $p=2$), the authors found that in all the cases considered, the critical points of $L_n$ were real numbers. Their experiments conclude with two conjectures about the location of zeros and critical points of the Sobolev-type orthogonal  polynomials (see \cite[Conjectures 1 and 2]{GaKu97}). In the following theorem,  we solve the problem derived from these conjectures for extremal polynomials when $\mu_0$ and $\mu_1$ in \eqref{SobolevNorm_p} are supported on mutually disjoint intervals.

\begin{theorem}\label{theo-funda}
Let $p\in (1,\infty)$ and let $\mu _0,\mu _1$ be finite positive Borel measures supported on  the real line
 such that $\inter{\Delta_0}\cap\inter{\Delta_1}=\emptyset$ ($\inter{A}$ denotes the interior of a real set $A$ with the Euclidean topology of $\RR$). 
  Then
\begin{itemize}

\item[\ref{theo-funda}.1.] for all $n\geq 1$, $\dsty   n-1 \leq \Nsign(L_{n};\inter{\Delta_0}) + \Nsign(L_{n}^{\prime};\inter{\Delta_1}) \leq n\;$ and the zeros of $L_{n}$ in $\inter{\Delta_0}$ are simple, where  the symbol  $\Nsign(Q;I)$ denotes the number of zeros with odd multiplicity of the polynomial $Q\in \PP$  on the interval $I\subset \RR$ (i.e. points of sign change).
  \item[\ref{theo-funda}.2.] for $n\geq 2$, the critical points of the extremal polynomial $L_{n}$ are simple and contained in $\intch{\Delta_0 \cup \Delta_1}$. ($\ch{A}$ denotes the convex hull of the set $A$).
\item[\ref{theo-funda}.3.] the number of  zeros (or critical points) of $L_{n}$ lying in
$\dsty \intch{\Delta_0 \cup \Delta_1} \setminus \left(\inter{\Delta_0} \cup \inter{\Delta_1}\right)$ is at most one. 
\item[\ref{theo-funda}.4.] the zeros of $L_{n}^{\prime}$ in $\inter{\Delta_0}$ 
 interlace the zeros of $L_{n}$ on that set.
\end{itemize}

\end{theorem}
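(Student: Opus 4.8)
The engine for all four statements is the variational characterization of $L_n$, obtained exactly as \eqref{ThChar-Lp} was in the $\Lp{\mu_0}$ case. Since $L_n+tQ$ remains monic of degree $n$ for every $Q\in\PPn{n-1}$, the minimality \eqref{ExtremalPol-Sob} forces $\left.\frac{d}{dt}\|L_n+tQ\|_{S,p}^{p}\right|_{t=0}=0$, which by \eqref{SobolevNorm_p} reads
\[
  \int_{\Delta_0} Q\,\sgn{L_n}\,|L_n|^{p-1}\,d\mu_0+\int_{\Delta_1} Q'\,\sgn{L_n'}\,|L_n'|^{p-1}\,d\mu_1=0,\qquad Q\in\PPn{n-1},
\]
and by strict convexity this is also sufficient. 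Everything below is squeezed out of this identity together with Rolle's theorem and the fundamental theorem of algebra. The hypothesis $\inter{\Delta_0}\cap\inter{\Delta_1}=\emptyset$ enters right here: assuming without loss of generality that $\Delta_0$ lies to the left of $\Delta_1$, any linear factor $(x-y)$ with $y\in\Delta_1$ keeps a constant sign on $\Delta_0$ and conversely, which is what lets me steer the two integrals independently.

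For \ref{theo-funda}.1 I treat the two inequalities separately. Writing $k=\Nsign(L_n;\inter{\Delta_0})$ and $m=\Nsign(L_n';\inter{\Delta_1})$, the upper bound is elementary: the $k$ sign changes of $L_n$ are $k$ distinct zeros, so Rolle yields at least $k-1$ zeros of $L_n'$ in their convex hull, which sits inside $\inter{\Delta_0}$ and is therefore disjoint from $\inter{\Delta_1}$; adding the $m$ sign changes of $L_n'$ in $\inter{\Delta_1}$ gives $(k-1)+m\le\deg L_n'=n-1$, i.e. $k+m\le n$. The lower bound is the crux and I argue by contradiction: if $k+m\le n-2$ I must build $Q\in\PPn{n-1}$ with $Q\,\sgn{L_n}\ge0$ on $\Delta_0$ and $Q'\,\sgn{L_n'}\ge0$ on $\Delta_1$, strictly on a set of positive $\mu_0$-measure, so that the identity above is violated. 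The natural candidate has $Q'$ vanishing (to odd order) at the $m$ sign changes $y_1<\cdots<y_m$ of $L_n'$ in $\inter{\Delta_1}$ and at $k-1$ auxiliary nodes placed one in each gap $(x_i,x_{i+1})$ between consecutive sign changes $x_1<\cdots<x_k$ of $L_n$ in $\inter{\Delta_0}$; this $Q'$ has degree $(k-1)+m\le n-3$, so $Q$ has degree $\le n-2$. On $\Delta_1$ the factors coming from $\inter{\Delta_0}$ are sign-constant, so $Q'$ matches $\sgn{L_n'}$ there once the global sign is fixed; on $\Delta_0$ the factors coming from $\inter{\Delta_1}$ are sign-constant, so $Q$ behaves like a primitive with extrema exactly at the $k-1$ nodes. \emph{The main obstacle of the whole argument is to show that the $k-1$ node positions and the integration constant can be chosen so that $Q$ has genuine sign changes at $x_1,\dots,x_k$ matching $L_n$} — a count of $k$ interpolation conditions $Q(x_i)=0$ against $k$ free parameters, to be settled by a continuity/degree argument.

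The simplicity of the zeros of $L_n$ in $\inter{\Delta_0}$ then comes for free by sharpening the two bounds. Let $d_0$ be the total multiplicity of the zeros of $L_n$ in $\inter{\Delta_0}$. Counting multiplicities in the Rolle argument gives at least $d_0-1$ zeros of $L_n'$ in $\inter{\Delta_0}$, whence $d_0+m\le n$; combined with $k+m\ge n-1$ this forces $d_0\le k+1$. Since any non-simple zero of $L_n$ in $\inter{\Delta_0}$ raises $d_0$ by at least $2$ above the sign-change count $k$, it would give $d_0\ge k+2$, a contradiction. The same tightness drives \ref{theo-funda}.2, \ref{theo-funda}.3 and \ref{theo-funda}.4: the $\ge k-1$ Rolle zeros of $L_n'$ in $\inter{\Delta_0}$ and the $m$ sign changes in $\inter{\Delta_1}$ already account for $\ge n-2$ of the $n-1$ zeros of $L_n'$, so at most one critical point lies elsewhere; because $n-2$ of them are real, the remaining one cannot be a lone member of a conjugate pair and is therefore real too, and $\deg L_n'=n-1$ leaves no room for multiplicities, so all critical points are real and simple. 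The same count shows $L_n'$ has no further zeros inside $(x_1,x_k)$, so exactly one critical point falls in each $(x_i,x_{i+1})$, which is the interlacing of \ref{theo-funda}.4; and it bounds by one the zeros (resp.\ critical points) of $L_n$ lying in $\ch{\Delta_0\cup\Delta_1}\setminus(\inter{\Delta_0}\cup\inter{\Delta_1})$, giving \ref{theo-funda}.3.

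It remains, for \ref{theo-funda}.2, to place every critical point \emph{strictly inside} $\intch{\Delta_0\cup\Delta_1}$, i.e.\ to confine the single possible exceptional zero to the open gap and to exclude critical points at or beyond the endpoints of $\ch{\Delta_0\cup\Delta_1}$. This is the second genuine difficulty, and Gauss--Lucas does not suffice, since Althammer's example shows the zeros of $L_n$ themselves may leave $\Delta_0\cup\Delta_1$. I would return to the characterization: a critical point $\zeta$ beyond $\sup(\Delta_0\cup\Delta_1)$ makes $(x-\zeta)$ sign-constant on both supports, so testing with $Q'=L_n'/(x-\zeta)\in\PPn{n-2}$ turns the $\Delta_1$-term into $\int_{\Delta_1}|L_n'|^p/(x-\zeta)\,d\mu_1<0$; the contradiction then requires showing that the $\Delta_0$-term cannot compensate it. Controlling this coupling between the two integrals—delicate precisely because $\|\cdot\|_{S,p}$ is not a Fej\'er norm—is, together with the lower-bound construction of \ref{theo-funda}.1, the remaining technical core of the proof.
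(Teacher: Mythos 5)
Your overall strategy --- the variational identity, a test polynomial $Q\in\PPn{n-1}$ with $QL_n\ge 0$ on $\Delta_0$ and $Q'L_n'\ge 0$ on $\Delta_1$, and Rolle-type counting --- is indeed the paper's strategy, but both places you flag as ``the remaining technical core'' are genuine gaps, and your proposed ways of closing them would not work as stated. First, the construction of $Q$. You build $Q'$ as an explicit product vanishing at the $m$ sign changes of $L_n'$ and at $k-1$ movable nodes, integrate, and then hope to tune the nodes and the constant of integration so that $Q$ vanishes at the $k$ sign changes $x_1<\dots<x_k$ of $L_n$; this is a nonlinear system, and the promised ``continuity/degree argument'' is never supplied. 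The paper sidesteps the nonlinearity entirely: all $\ell_n=k+m$ conditions $Q(x_i)=0$, $Q'(y_j)=0$ are homogeneous \emph{linear} equations in the $\ell_n+1$ coefficients of a polynomial of degree $\le\ell_n$, so a nontrivial $Q$ exists trivially; the substance is then Lemma \ref{lemma-Rolle}, which gives $\Nzero(Q;(a,b))+\Nzero(Q';(c,d))\le\dgr{Q}\le\ell_n$, forcing all these zeros to be simple and exhaustive, whence $QL_n$ and $Q'L_n'$ have constant sign. Second, and more serious, you assert that both products can be made nonnegative ``once the global sign is fixed.'' Replacing $Q$ by $-Q$ flips \emph{both} products simultaneously, so the relative sign of the two integrals is dictated by the construction and must be verified; this is exactly why the paper takes $x_0$ to be the sign change of $L_n$ in $(a,b)$ closest to $[c,d]$ and distinguishes case (I), $\sgn{L_n'(x_0+\epsilon)L_n'(c+\epsilon)}=1$, from case (II), where this sign is $-1$, adding in case (II) the extra interpolation condition $Q'(c)=0$ precisely to repair the relative sign. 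Your proposal contains no mechanism for this, and without it the contradiction $\langle Q,L_n\rangle_{S,p}>0$ cannot be drawn.

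The second gap concerns statement \ref{theo-funda}.2. Your count places only $n-2$ of the $n-1$ critical points and shows the last one is real; realness and simplicity are not the issue, location is. Your fallback --- testing with $Q'=L_n'/(x-\zeta)$ for a critical point $\zeta$ outside the hull --- founders exactly where you admit: $Q$ is then a primitive of $L_n'/(x-\zeta)$ with no sign relation to $L_n$ on $\Delta_0$, so the $\Delta_0$ integral cannot be controlled and no contradiction follows. In the paper this problem never arises, because the contradiction argument for statement 1 is run with \emph{case-dependent} hypotheses ($\ell_n\le n-1$ in case (I), $\ell_n\le n-2$ in case (II)), yielding the sharper conclusion $\ell_n=n$ in case (I); then Rolle already accounts for all $n-1$ zeros of $L_n'$ inside $(a,x_0)\cup(c,d)$, while in case (II) the intermediate value theorem applied to the sign condition places the single remaining critical point in $(x_0,c]$. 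In both cases $\Nsign(L_n';(a,d))=n-1$, and statements \ref{theo-funda}.2, \ref{theo-funda}.3 and \ref{theo-funda}.4 all fall out of this one tight count. Your uniform contradiction hypothesis $k+m\le n-2$ gives only $\ell_n\ge n-1$, which in case (I) leaves one critical point unlocated --- so even granting your construction of $Q$, statements 2--4 would remain out of reach without the case-dependent refinement.
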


The next result provides a natural and intrinsic characterization of the extremal polynomials defined by \eqref{ExtremalPol-Sob}, and an extension of \eqref{ThChar-Lp} for the Sobolev case.  

\begin{theorem}\label{ThChar-1}
 $L_{n}$ is the $n$th monic extremal polynomial with respect to $\|\cdot\|_{S,p}$ if and only if
 \begin{eqnarray*}
 \langle Q, L_n \rangle_{S,p} &:=&   \int_{\Delta_0} Q(x)\,\sgn{L_{n}(x)}\,|L_{n}(x)|^{p-1}d\mu_0(x) \nonumber  \\ & &  +
\int_{\Delta_1} Q^{\prime}(x)\,\sgn{L_{n}^{\prime}(x)}\,|L_{n}^{\prime}(x)|^{p-1}d\mu_1(x)=0,
\end{eqnarray*}
 for every polynomial $Q \in \PPn{n-1}$.
\end{theorem}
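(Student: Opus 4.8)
The plan is to read Theorem \ref{ThChar-1} as the first-order (Euler--Lagrange) optimality condition for the convex minimization \eqref{ExtremalPol-Sob}, exploiting that every competing monic polynomial of degree $n$ differs from $L_n$ by an element of $\PPn{n-1}$. Concretely, fix $Q\in\PPn{n-1}$ and set, for $t\in\RR$,
$$
\Phi_Q(t):=\|L_n+tQ\|_{S,p}^p=\int_{\Delta_0}|L_n+tQ|^p\,d\mu_0+\int_{\Delta_1}|L_n'+tQ'|^p\,d\mu_1 .
$$
Since $L_n+tQ\in\PP^*_n$ for every $t$, the extremality of $L_n$ is equivalent to comparing $\Phi_Q$ with $\Phi_Q(0)=\|L_n\|_{S,p}^p$, so the whole theorem reduces to understanding the one-variable functions $\Phi_Q$.

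The first step is to prove $\Phi_Q\in C^1(\RR)$ and to identify $\Phi_Q'$. The elementary fact I would use is that for $1<p<\infty$ the map $g(s)=|s|^p$ is continuously differentiable with $g'(s)=p\,\sgn{s}\,|s|^{p-1}$; the only delicate point is $s=0$, but $p-1>0$ makes $g'$ continuous there with value $0$, so no jump occurs even where $L_n$ or $L_n'$ vanish. Consequently each integrand is $C^1$ in $t$, and since all polynomials are $p$-integrable against the finite measures $\mu_0,\mu_1$, the $t$-derivatives are dominated, for $|t|\le\delta$, by a fixed integrable function. Dominated convergence then licenses differentiation under the integral sign and gives
$$
\Phi_Q'(t)=p\!\int_{\Delta_0}\!\sgn{L_n+tQ}\,|L_n+tQ|^{p-1}Q\,d\mu_0 +p\!\int_{\Delta_1}\!\sgn{L_n'+tQ'}\,|L_n'+tQ'|^{p-1}Q'\,d\mu_1 .
$$
Evaluating at $t=0$ yields exactly $\Phi_Q'(0)=p\,\langle Q,L_n\rangle_{S,p}$.

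The second step records that $\Phi_Q$ is convex: $g(s)=|s|^p$ is convex, the maps $t\mapsto L_n(x)+tQ(x)$ and $t\mapsto L_n'(x)+tQ'(x)$ are affine, and integration against the positive measures $\mu_0,\mu_1$ preserves convexity. With $C^1$-differentiability and convexity both in hand, the two implications follow at once. For necessity, if $L_n$ is extremal then $\Phi_Q(t)\ge\Phi_Q(0)$ for all $t$, so $0$ is a global minimum, whence $\Phi_Q'(0)=0$, i.e. $\langle Q,L_n\rangle_{S,p}=0$ for the arbitrary $Q\in\PPn{n-1}$. For sufficiency, the hypothesis gives $\Phi_Q'(0)=0$ for every such $Q$; convexity promotes this stationary point to a global minimum, so $\Phi_Q(1)\ge\Phi_Q(0)$, and choosing $Q=P-L_n\in\PPn{n-1}$ for an arbitrary $P\in\PP^*_n$ turns this into $\|P\|_{S,p}^p\ge\|L_n\|_{S,p}^p$, so $L_n$ solves \eqref{ExtremalPol-Sob}.

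I expect the main (and essentially only) obstacle to be the rigorous justification of differentiation under the integral sign near the zeros of $L_n$ and $L_n'$, where the naive derivative involves the discontinuity of $\sgn{\cdot}$. This is defused precisely by the observation that the combined factor $\sgn{s}\,|s|^{p-1}$ is continuous for $p>1$, so no boundary contributions appear and a uniform (in small $t$) integrable majorant exists. Everything beyond this point is a standard convexity argument, and I would keep those computations brief.
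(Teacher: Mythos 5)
Your proof is correct, and while the necessity half follows the paper's own argument, the sufficiency half takes a genuinely different route. The paper proves Theorem \ref{ThChar-1} as the case $m=1$ of Theorem \ref{ThChar-2}, and for necessity it does exactly what you do: it considers $F(\alpha)=\|L_n+\alpha Q\|_{S,p,m}^p$, notes that $\alpha=0$ is a minimum, and sets $F'(0)=0$; in fact you are more careful than the paper, which differentiates under the integral sign without comment, whereas you justify it via the continuity of $s\mapsto\sgn{s}\,|s|^{p-1}$ for $p>1$ and a dominated-convergence majorant valid for $|t|\le\delta$. For sufficiency, however, the paper never invokes convexity: writing an arbitrary competitor as $\widetilde{Q}=L_n+Q$ with $Q\in\PPn{n-1}$, it introduces $G_k=\sgn{L_n^{(k)}}\,|L_n^{(k)}|^{p-1}\in\Lq(\mu_k)$, uses the orthogonality hypothesis to get $\|L_n\|_{S,p,m}^p=\langle\widetilde{Q},L_n\rangle_{S,p,m}$, and then bounds this by H\"older's inequality followed by Young's inequality $\alpha^{1/p}\beta^{1/q}\le\alpha/p+\beta/q$, obtaining $\|L_n\|_{S,p,m}^p\le\|\widetilde{Q}\|_{S,p,m}^p/p+\|L_n\|_{S,p,m}^p/q$, which rearranges to $\|L_n\|_{S,p,m}\le\|\widetilde{Q}\|_{S,p,m}$. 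Your route instead observes that each $\Phi_Q$ is convex (affine maps composed with $|s|^p$, integrated against positive measures), so the stationary point at $t=0$ is automatically a global minimum, and the choice $Q=P-L_n$ finishes the proof. The trade-off: your argument is more conceptual and unified, reusing the differentiability already established for necessity and exhibiting the theorem as the Euler--Lagrange condition of a convex functional; the paper's argument is purely inequality-based on the sufficiency side, requiring no differentiation there at all, at the cost of invoking H\"older and Young. Both arguments are complete and both extend without change to the higher-order norm $\|\cdot\|_{S,p,m}$.
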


Note that unless $p=2$, $ \langle \cdot, \cdot \rangle_{S,p}$ does not define an inner product.

Theorem \ref{ThChar-1} is a corollary of Theorem \ref{ThChar-2} below, where  \eqref{SobolevNorm_p} is replaced by a Sobolev norm with derivatives  of higher order. More precisely, for $f\in\PP$, set
\begin{equation}\label{SobolevNorm_p-m}
   \|f\|_{S,p,m}=\left(\sum_{k=0}^{m} \|f^{(k)} \|_{k,p}^p\right)^{\frac{1}{p}}=\left(\sum_{k=0}^{m}\int |f^{(k)}|^p d\mu_k\right)^{\frac{1}{p}},
\end{equation}
where $m$ is a fixed non-negative integer, $1<p<\infty$, $\mu_k$ is a positive Borel measure supported on $\RR$ ($k=0,\dots,m$),  $\sopor{\mu_0}$ is an infinite set, 
and $f^{(k)}$ denotes the $k$th derivative of $f$. When $m=1$,    \eqref{SobolevNorm_p-m}, reduces to \eqref{SobolevNorm_p} and $\|\cdot\|_{S,p}=\|\cdot\|_{S,p,1}$.

According to  \eqref{SobolevNorm_p-m}, $L_n \in \PP^*_n$ (the $n$th monic extremal polynomial with respect to \eqref{SobolevNorm_p-m}) is a monic polynomial   that verifies
\begin{equation}\label{ExtremalPoly_p-m}
   \|L_n\|_{S,p,m} \,=\, \min_{Q\in\PP^*_{n}} \|Q\|_{S,p,m}.
\end{equation}

When $p=2$, and the norm \eqref{SobolevNorm_p-m} is given by an inner product, the corresponding Sobolev extremal polynomials (or orthogonal with respect to the associated inner product) have been extensively studied. A survey on the subject is provided in \cite{MaXu15}. However, for $p\neq 2$ ($1<p<\infty$) not much has been attained and the basic references are \cite{LoPePi05}  for the so called \emph{``sequentially dominated  norms''} and \cite{LopMarPij06} for measures with unbounded support on the real line.

Section 2 is devoted to the study of the existence and uniqueness of the extremal polynomial with respect to the norm \eqref{SobolevNorm_p-m}. Theorem  \ref{ThChar-2}, which is of independent interest, is the main tool for locating zeros and critical points. In Section \ref{Sec-2Intervals}, we prove Theorem \ref{theo-funda} and Corollary \ref{Theo_ZerCrit} on the location and algebraic properties of the zeros and critical points of the extremal polynomials $L_n$.
The rest of the paper is devoted to the study of the asymptotic zeros distribution of the zeros and critical points of the Sobolev extremal polynomials.  Next, we state the main result in this direction after introducing some needed terminology.

For any complex polynomial $Q_{n}(z)=c \prod_{k=1}^{n} (z-z_k)$, with $c,z_1,\ldots,z_n \in \CC$,  we denote by $\sigma(Q_{n})$ the so called \emph{normalized zero counting measure associated with} $Q_{n}$, as
\begin{equation}\label{ZeroCountMeasure-1}
 \sigma(Q_{n}) = \frac{1}{n} \sum_{j=1}^{n} \delta_{z_j} \;,
\end{equation}
where  $\delta_{z_j}$ is the Dirac measure with mass one at the point $z_j$. Following the usual terminology, if $\{\mu_n\}$ is a sequence of measures on a compact set $K \subset \mathbb{C}$, we say that a measure $\mu$ is the limit  of $\{\mu_n\}$ in  the weak star  topology of measures, if
$$\lim_{n\to \infty}\int f d\mu_n=\int f d\mu,$$
for every continuous function $f$ on $K$. In this case we write $\dsty \wlim_{n\to \infty} \mu_n=\mu$.

Let $\mu$ be a finite Borel measure whose compact support $S(\mu) \subset \mathbb{C}$ has positive logarithmic capacity and let $P_n$ be the associated monic orthogonal polynomial with respect to $\mu$ of degree $n$. We say that $\mu$ is regular and write $\mu \in \reg$ if
\[\lim_{n \to \infty} \|P_n\|_{\mu,2}^{1/n} =  \capa{S(\mu)},\]
where   $\|P_n\|_{\mu,2}$ denotes the $L_2(\mu)$ norm of $P_n$. Theorem 3.1.1 in \cite{StaTo92} contains several equivalent forms of defining regular measures (see also \cite[Theorems 3.2.1, 3.2.3]{StaTo92}). Recall that for any compact set $K \subset \mathbb{C}$ with $\capa{K} > 0$ there exists a unique probability measure $\mu_K, S(\mu_K) \subset K,$ called the \emph{equilibrium measure} of $K$, which is characterized by
\[ \int \log\frac{1}{|z-x|} d\mu_K(x)
\left\{
\begin{array}
{cl}
= \gamma, & z \in K \setminus A, \,\,\capa{A} =0, \\
\leq \gamma, & z \in \mathbb{C},
\end{array}\right.\]
where $A$ is a Borel set, and $\gamma$ is some uniquely determined constant (actually $e^{-\gamma} = \capa{K}$).

\begin{theorem}\label{ThRegAD} Let $\{L_{n}\}$ be the sequence of monic extremal polynomials relative to  \eqref{SobolevNorm_p} with $p \in [1,\infty)$ and  $\mu_0,\mu_1 \in \reg$. Then, for each integer $j> 0$
\begin{align}\label{Thm-AsinDer-1}\lim_{n \to\infty}   \|L_{n}^{(j)}\|_{\Delta}^{1/n} & = \capa{\Delta},
\\ \label{Thm-AsinDer-2}
  \wlim_{n \to \infty} \sigma\left(L_{n}^{(j)}\right)&=\mu_{\Delta},
\end{align}
where $\Delta = \Delta_0 \cup \Delta_1$ and $\mu_{\Delta}$ is the equilibrium measure on $\Delta$.
\end{theorem}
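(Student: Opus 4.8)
The plan is to show that $\liminf_n\|L_n^{(j)}\|_\Delta^{1/n}$ and $\limsup_n\|L_n^{(j)}\|_\Delta^{1/n}$ both equal $\capa{\Delta}$, and then to read off the equilibrium distribution \eqref{Thm-AsinDer-2} from the resulting asymptotic minimality of the sequence. Write $\gamma=\log(1/\capa{\Delta})$ and note that, $\Delta=\Delta_0\cup\Delta_1$ being a finite union of nondegenerate intervals, it is a regular compact set with connected complement; hence its equilibrium potential is continuous and $\int\log\frac{1}{|z-t|}\,d\mu_{\Delta}(t)=\gamma$ holds at \emph{every} point of $\Delta$. The lower bound is immediate and uniform in $j$: since $\frac{(n-j)!}{n!}L_n^{(j)}$ is monic of degree $n-j$ and every monic polynomial $M$ satisfies $\|M\|_\Delta\ge\capa{\Delta}^{\deg M}$, we get $\|L_n^{(j)}\|_\Delta\ge\frac{n!}{(n-j)!}\,\capa{\Delta}^{\,n-j}$, so that $\liminf_n\|L_n^{(j)}\|_\Delta^{1/n}\ge\capa{\Delta}$ because $(n!/(n-j)!)^{1/n}\to1$.

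For the upper bound I first show $\limsup_n\|L_n'\|_\Delta^{1/n}\le\capa{\Delta}$; this is the heart of the matter and explains why the statement excludes $j=0$. Feeding the monic Chebyshev polynomials $T_n$ of $\Delta$ (for which $\|T_n\|_\Delta^{1/n}\to\capa{\Delta}$) into the extremality \eqref{ExtremalPol-Sob}, and using $\|T_n\|_{0,p}\le\mu_0(\RR)^{1/p}\|T_n\|_\Delta$ together with Markov's inequality on the single interval $\Delta_1$, namely $\|T_n'\|_{1,p}\le\mu_1(\RR)^{1/p}\|T_n'\|_{\Delta_1}\le C\,n^2\|T_n\|_\Delta$, I obtain $\limsup_n\|L_n\|_{S,p}^{1/n}\le\limsup_n\|T_n\|_{S,p}^{1/n}\le\capa{\Delta}$. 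Since $\|L_n\|_{0,p}$ and $\|L_n'\|_{1,p}$ are each dominated by $\|L_n\|_{S,p}$, the hypothesis $\mu_0,\mu_1\in\reg$, in the form of the regularity comparison of \cite{StaTo92} transported to the $L^p$-norm on the supports $\Delta_0,\Delta_1$, yields $\limsup_n\|L_n\|_{\Delta_0}^{1/n}\le\capa{\Delta}$ and $\limsup_n\|L_n'\|_{\Delta_1}^{1/n}\le\capa{\Delta}$. Now Markov's inequality on the interval $\Delta_0$ converts the first estimate into $\|L_n'\|_{\Delta_0}\le C\,n^2\|L_n\|_{\Delta_0}$, and combining the two intervals gives $\limsup_n\|L_n'\|_\Delta^{1/n}\le\capa{\Delta}$.

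At this point $\frac{1}{n}L_n'$ is a sequence of monic polynomials of degree $n-1$ with $\limsup_n\|\frac{1}{n}L_n'\|_\Delta^{1/(n-1)}\le\capa{\Delta}$, so the lower bound forces $\lim_n\|L_n'\|_\Delta^{1/n}=\capa{\Delta}$, and the standard potential-theoretic principle for asymptotically minimal monic polynomials on a compact set with connected complement gives $\wlim_n\sigma(L_n')=\mu_{\Delta}$; this settles \eqref{Thm-AsinDer-1}--\eqref{Thm-AsinDer-2} for $j=1$. In identifying the limit one checks, via the lower-envelope theorem, that any weak-star limit $\nu$ of $\sigma(L_n')$ satisfies $\int\log\frac{1}{|z-t|}\,d\nu(t)\ge\gamma$ quasi-everywhere on $\Delta$; by Theorem \ref{theo-funda} the critical points are real and lie in $\intch{\Delta}$ with at most one in the gap, so $\nu$ is a unit measure carried by $\Delta$, and the minimum principle at infinity applied to $\int\log\frac{1}{|z-t|}\,d(\nu-\mu_\Delta)(t)$ forces $\nu=\mu_\Delta$. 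For $j\ge2$ I pass from $L_n'$ to its higher derivatives by Rolle's theorem: the zeros of $L_n^{(j)}$ are real, lie in $\ch{\Delta}$, and interlace those of $L_n^{(j-1)}$, so $\sigma(L_n^{(j)})$ and $\sigma(L_n')$ share the weak-star limit $\mu_{\Delta}$; the sup-norm limit \eqref{Thm-AsinDer-1} then follows from this zero distribution through the principle of descent (evaluating $\frac{1}{n}\log|L_n^{(j)}|$ at points of $\Delta$ where the maximum is attained and using the continuity of the equilibrium potential) together with the lower bound of the first paragraph.

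The main obstacle is exactly the transfer carried out in the second paragraph. The Sobolev norm \eqref{SobolevNorm_p} never sees $L_n$ on $\Delta_1$, so $\|L_n\|_\Delta$ itself cannot be controlled --- this is the analytic reason behind the Althammer-type phenomenon \cite{Alt62} of zeros leaving $\Delta$, and it is why $j=0$ must be excluded. What rescues the derivatives is that differentiation, combined with Markov's inequality \emph{applied on each interval separately}, upgrades the control of $L_n$ on $\Delta_0$ to control of $L_n'$ there; it is essential not to apply a Bernstein--Walsh estimate across the gap, since that would introduce the Green function of a single interval and replace $\capa{\Delta}$ by a strictly larger constant. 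The remaining delicate points are purely potential-theoretic: the passage from the finite-$n$ inequalities to $\int\log\frac{1}{|z-t|}\,d\nu\ge\gamma$ for the limit measure requires the lower-envelope theorem, while the identification $\nu=\mu_\Delta$ requires both the confinement of the critical points supplied by Theorem \ref{theo-funda} and the unicity of the equilibrium measure.
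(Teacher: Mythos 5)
Your treatment of the norm asymptotics \eqref{Thm-AsinDer-1} for $j=1$ is correct and runs parallel to the paper's: where the paper compares sup norms of a polynomial and its derivatives on a regular compact set via Cauchy's integral formula (its estimate \eqref{compnormas2}), you use Markov's inequality on each interval separately; both devices cost only subexponential factors, and your combination of the Chebyshev test polynomials with the regularity criterion \eqref{compnormas1} is exactly the paper's argument for \eqref{AsymRootSobNorm-02} and \eqref{cotanorm}. The lower bound via $\|M\|_{\Delta}\geq \capa{\Delta}^{\deg M}$ is also fine.

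The genuine gap is in the zero-distribution part and in your treatment of $j\geq 2$, and it concerns the hypotheses. Theorem \ref{ThRegAD} assumes only $p\in[1,\infty)$ and $\mu_0,\mu_1\in\reg$; it does \emph{not} assume $\inter{\Delta_0}\cap\inter{\Delta_1}=\emptyset$ (the remark immediately after its proof explicitly treats the overlapping case $\Delta_0\cap\Delta_1\neq\emptyset$), nor does it exclude $p=1$. Your proof invokes Theorem \ref{theo-funda} at two essential points: to conclude that any weak-star limit $\nu$ of $\sigma(L_n')$ is a unit measure carried by $\Delta$ (confinement of the critical points, with at most one in the gap), and, for $j\geq 2$, to know that the zeros of $L_n^{(j)}$ are real so that interlacing and the principle of descent can be applied. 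But Theorem \ref{theo-funda} is available only for $p\in(1,\infty)$ and disjoint interiors, so for $p=1$ or overlapping supports your argument collapses exactly at the identification step: without localization you cannot assert that $\nu$ lives on $\Delta$, and the minimum-principle argument has nothing to act on. The missing idea is that no localization of zeros is needed at all: the Blatt--Saff--Simkani theorem \cite{BlSaSi88}, quoted as \eqref{weaklim}, says that for a compact $K\subset\RR$ with $\capa{K}>0$, \emph{any} sequence of monic polynomials with $\limsup_n\|Q_n\|_K^{1/n}=\capa{K}$ satisfies $\wlim_{n}\sigma(Q_n)=\mu_K$, wherever its zeros may lie in $\CC$. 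Consequently the correct organization (the paper's) is to prove \eqref{Thm-AsinDer-1} for every $j\geq 1$ first --- which you can do with your own tools, e.g. iterated Markov, $\|L_n^{(j)}\|_{\Delta_0}\leq (Cn^2)^{j}\|L_n\|_{\Delta_0}$ and $\|L_n^{(j)}\|_{\Delta_1}\leq (Cn^2)^{j-1}\|L_n'\|_{\Delta_1}$, which needs neither disjointness nor $p>1$ --- and then \eqref{Thm-AsinDer-2} follows at once from \eqref{weaklim}, making the detour through critical-point location, interlacing, the lower-envelope theorem and the principle of descent unnecessary. As written, your proposal proves the theorem only under the strictly stronger hypotheses of Theorem \ref{theo-funda}, not as stated.
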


Notice that \eqref{Thm-AsinDer-2} holds for $j > 0$. For $j=0$ the zeros of the polynomials $L_n$ can abandon $\Delta$ and their asymptotic zero distribution is governed by the balayage of $\mu_\Delta$ onto a certain region which we describe later (for details, see Theorem \ref{BalayageTh01} below).

\section{The Characterization Theorem}

Throughout this section   we consider the more general Sobolev norm \eqref{SobolevNorm_p-m} and $L_n$ verifies \eqref{ExtremalPoly_p-m}. As $\PPn{n-1}$ is a finite dimensional linear space, the existence of $L_n \in \PP^*_n$ is obvious.
In addition, $L_n$ has real coefficients, for otherwise $L_n$ could be rewritten as $L_n=P+iQ$ where $Q$ and $P$ are polynomials with real coefficients, $Q\not\equiv 0$ and $P$ is a monic polynomial of degree $n$ satisfying
\begin{align*}
\|L_n\|_{S,p,m}^p&=\sum_{k=0}^{m}\int |P^{(k)}+iQ^{(k)}|^p d\mu_k=\sum_{k=0}^{m}\int\left(\left(P^{(k)}\right)^2+\left(Q^{(k)}\right)^2\right)^{\frac{p}{2}}d\mu\\&> \sum_{k=0}^{m}\int\left| P^{(k)}\right|^pd\mu=\|P\|_{S,p,m}^p
\end{align*}

The next proposition contains a direct proof of the uniqueness of  the Sobolev extremal polynomial $L_{n}$ and shows that \eqref{SobolevNorm_p-m} is a strictly convex norm.

\begin{proposition}[Uniqueness] Let $\|\cdot\|_{S,p,m}$ be the Sobolev type norm defined by \eqref{SobolevNorm_p-m}. Then,  there exists a unique monic polynomial $L_{n}$ ($\dgr{L_{n}}=n$) such that $\dsty \|L_{n}\|_{S,p,m}= \inf_{Q_n \in\PP^*_{n}} \|Q_n\|_{S,p,m}.$
\label{Propo_ExUn}
\end{proposition}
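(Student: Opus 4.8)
\emph{Plan.} Since a minimizer exists by the finite dimensionality of $\PP^*_n$ together with the continuity and coercivity of $\|\cdot\|_{S,p,m}$, the whole content of the proposition is uniqueness, and my plan is to deduce it from \emph{strict convexity} of the norm. Once strict convexity is in hand, uniqueness is automatic: the admissible set $\PP^*_n=x^n+\PPn{n-1}$ is a convex (affine) set, so if $L_n$ and $\widetilde{L}_n$ were two distinct monic minimizers sharing the minimal value $\rho=\inf_{Q\in\PP^*_n}\|Q\|_{S,p,m}$, their midpoint $\tfrac12(L_n+\widetilde{L}_n)$ would again lie in $\PP^*_n$ (the two leading coefficients average to $1$, so the degree stays exactly $n$), while strict convexity would force $\|\tfrac12(L_n+\widetilde{L}_n)\|_{S,p,m}<\rho$, contradicting minimality. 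Thus it suffices to prove that $\|\cdot\|_{S,p,m}$ is a strictly convex norm on $\PP$.

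\emph{Reduction to an $L^p$ space.} The decisive observation is that $\|\cdot\|_{S,p,m}$ is the pullback of a genuine $L^p$ norm under an injective linear map. Put $Y=\{0,1,\dots,m\}\times\RR$ and let $\nu$ be the positive Borel measure on $Y$ whose restriction to the $k$th sheet $\{k\}\times\RR$ is $\mu_k$. Define $\Phi:\PP\to L^p(\nu)$ by $(\Phi f)(k,x)=f^{(k)}(x)$. Then, straight from \eqref{SobolevNorm_p-m},
\[
\int_Y |\Phi f|^p\,d\nu=\sum_{k=0}^m\int |f^{(k)}|^p\,d\mu_k=\|f\|_{S,p,m}^p,
\]
so $\|f\|_{S,p,m}=\|\Phi f\|_{L^p(\nu)}$ for every $f\in\PP$, and homogeneity and the triangle inequality are inherited from $L^p(\nu)$.

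\emph{Injectivity and transport of strict convexity.} Next I would verify that $\Phi$ is injective, which is exactly where the hypothesis that $\sopor{\mu_0}$ is infinite is used. If $\Phi f=0$ in $L^p(\nu)$, then in particular $f=0$ $\mu_0$-almost everywhere on the zeroth sheet; were $f\not\equiv0$, its zero set would be finite, forcing $\mu_0$ to be concentrated on a finite set and contradicting the infinitude of $\sopor{\mu_0}$. Hence $f\equiv0$, so $\Phi$ is injective, which also yields positive definiteness of $\|\cdot\|_{S,p,m}$ despite the summands with $k\geq1$ being mere seminorms. Since $1<p<\infty$, the space $L^p(\nu)$ is strictly convex (the equality case of Minkowski's inequality forces proportionality, which together with equal norms forces equality). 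Strict convexity then transports through the injective linear $\Phi$: if $\|f\|_{S,p,m}=\|g\|_{S,p,m}=1$ with $f\neq g$, then $\Phi f\neq\Phi g$ are distinct unit vectors of $L^p(\nu)$, whence $\|\tfrac12(f+g)\|_{S,p,m}=\|\tfrac12(\Phi f+\Phi g)\|_{L^p(\nu)}<1$. This establishes strict convexity and closes the argument.

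\emph{Main obstacle.} The only genuinely delicate point is that $\|\cdot\|_{S,p,m}$ must be shown to be a \emph{norm} and to be \emph{strictly} convex even though its higher-order terms are only seminorms; the embedding $\Phi$ resolves both difficulties at once by reducing everything to the classical strict convexity of $L^p$ for $1<p<\infty$, with the infinitude of $\sopor{\mu_0}$ entering precisely—and only—to guarantee injectivity of $\Phi$, equivalently positive definiteness of the norm.
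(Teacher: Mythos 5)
Your proof is correct, and it takes a recognizably different route from the paper's. The paper argues directly on a hypothetical pair of distinct monic minimizers: by extremality their midpoint is again extremal, which forces equality in the triangle inequality $\|L_{n}+\widetilde{L}_{n}\|_{S,p,m}=\|L_{n}\|_{S,p,m}+\|\widetilde{L}_{n}\|_{S,p,m}$; splitting this equality through the componentwise Minkowski bound, it deduces $\|L_{n}^{(k)}+\widetilde{L}_{n}^{(k)}\|_{k,p}=\|L_{n}^{(k)}\|_{k,p}+\|\widetilde{L}_{n}^{(k)}\|_{k,p}$ for every $k$, and then uses \emph{only the $k=0$ component}: the equality case of Minkowski's inequality in $\Lp{\mu_0}$ gives $L_{n}=\alpha\widetilde{L}_{n}$ $\mu_0$-almost everywhere, and monicity together with the infinitude of $\sopor{\mu_0}$ forces $L_{n}=\widetilde{L}_{n}$. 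You instead prove once and for all that $\|\cdot\|_{S,p,m}$ is a strictly convex norm on $\PP$, by exhibiting it as the isometric pullback of the $L^p$ norm of a single measure $\nu$ on the disjoint union $\{0,\dots,m\}\times\RR$ under the injective linear map $f\mapsto (f^{(k)})_{k}$, and then invoke the standard fact that a strictly convex norm has at most one minimizer over a convex (affine) set such as $\PP^{*}_{n}$. The underlying ingredients coincide---the equality case of Minkowski for $1<p<\infty$ and the fact that a nonzero polynomial cannot vanish $\mu_0$-almost everywhere when $\sopor{\mu_0}$ is infinite---but the two organizations buy different things: the paper's argument is elementary and self-contained, needing the equality case only in the zeroth component; yours is more structural, actually establishes the strict convexity assertion that the paper announces in the preamble to the proposition (not merely uniqueness along the minimizing pair), makes transparent that the hypothesis on $\sopor{\mu_0}$ enters exactly to give positive definiteness (injectivity of $\Phi$), and generalizes verbatim to any number of derivative terms. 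Your brief treatment of existence (continuity plus coercivity on the finite-dimensional affine set $\PP^{*}_{n}$) is slightly more explicit than the paper's one-line remark and is fine.
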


\begin{proof} If $L_{n}$ and $\widetilde{L}_{n}$ are two different monic extremal polynomials of degree $n$,  from the extremality and the triangular inequality it is obvious that $\frac{1}{2}\left( L_{n}+\widetilde{L}_{n}\right)$ is also a monic extremal polynomials.
Hence
\begin{align}\label{triangular- equality}
\| L_{n}+\widetilde{L}_{n}\|_{S,p,m}=\|L_{n}\|_{S,p,m}+\|\widetilde{L}_{n}\|_{S,p,m}.
\end{align}
From the Minkowski inequality we obtain
\begin{eqnarray*}
\|L_{n}+\widetilde{L}_{n}\|_{S,p,m} &=& \left(\sum_{k=0}^{m} \|L_{n}^{(k)}+\widetilde{L}_{n}^{(k)} \|_{k,p}^p \right)^{1/p}\\    &\leq &
 \left(\sum_{k=0}^{m} \left(\|L_{n}^{(k)}\|_{k,p}+\|\widetilde{L}_{n}^{(k)}\|_{k,p}\right)^p \right)^{1/p}\\
    &\leq & \|L_{n}\|_{S,p,m}+\|\widetilde{L}_{n}\|_{S,p,m}.
\end{eqnarray*}
Therefore, by \eqref{triangular- equality}, the first inequality just shown is in fact the equality
$$\dsty \sum_{k=0}^{m} \|L_{n}^{(k)}+\widetilde{L}_{n}^{(k)} \|_{k,p}^p = \sum_{k=0}^{m}
\left(\|L_{n}^{(k)}\|_{k,p}+\|\widetilde{L}_{n}^{(k)}\|_{k,p}\right)^p.$$ However
$$\|L_{n}^{(k)}+\widetilde{L}_{n}^{(k)} \|_{k,p}^p\leq\left(\|L_{n}^{(k)}\|_{k,p}+\|\widetilde{L}_{n}^{(k)}\|_{k,p}\right)^p, \quad k=0,1,\cdots,m; $$
therefore,
$$\|L_{n}^{(k)}+\widetilde{L}_{n}^{(k)}\|_{k,p}  = \|L_{n}^{(k)}\|_{k,p}+\|\widetilde{L}_{n}^{(k)}\|_{k,p},\quad k=0,1,\cdots,m. $$
In particular, from the Minkowski inequality for $\Lp{\mu_0}$,  we have that there exists a constant $\alpha \geq 0$  such that $L_{n}=\alpha\widetilde{L}_{n}$ almost everywhere with respect to $\mu_0$. But, $L_{n}$ and $\widetilde{L}_{n}$ are monic polynomials and $\sopor{\mu_0}$ is an infinite set,  hence $L_{n}=\widetilde{L}_{n}$.
\end{proof}

\begin{theorem}[Characterization] \label{ThChar-2}  Let $\|\cdot\|_{S,p,m}$ $(1 < p < \infty)$ be the Sobolev type norm defined in   \eqref{SobolevNorm_p-m}. Then, the monic polynomial $L_{n}$ is the $n$th  \emph{monic extremal polynomial relative to}  $\|\cdot\|_{S,p,m}$ if and only if
\begin{equation}\label{ThChar-2-0}
\langle Q, L_n\rangle_{S,p,m} :=   \sum_{k=0}^m\int Q^{(k)}\sgn{L_n^{(k)}}\left|L_n^{(k)}\right|^{p-1}d\mu_k=0,
\end{equation}
 for every polynomial $Q \in \PPn{n-1}$. \label{ThChar-p}
\end{theorem}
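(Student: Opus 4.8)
The plan is to exploit the strict convexity of $\|\cdot\|_{S,p,m}$ together with a variational (Gateaux derivative) argument, which is the standard route for $L^p$-type extremal problems. First I would establish the ``only if'' direction via a first-order optimality condition. Fix an arbitrary $Q \in \PPn{n-1}$ and consider the real-valued function $\phi(t) = \|L_n + tQ\|_{S,p,m}^p$ for $t \in \RR$. Since $L_n + tQ$ is monic of degree $n$ whenever $L_n$ is (here $Q$ has degree $\le n-1$, so adding $tQ$ does not change the leading coefficient), the extremality of $L_n$ forces $\phi$ to attain its minimum at $t=0$, whence $\phi'(0)=0$ provided $\phi$ is differentiable there. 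Computing $\phi'(0)$ amounts to differentiating under the integral sign each term $\int |L_n^{(k)} + tQ^{(k)}|^p \, d\mu_k$. The key elementary computation is that $\frac{d}{dt}|a+tb|^p\big|_{t=0} = p\,\sgn{a}\,|a|^{p-1}\,b$, which holds pointwise wherever $a \neq 0$ (and the integrand is controlled near $a=0$ because $p>1$ makes $|a+tb|^p$ differentiable with a bounded difference quotient). Summing over $k$ and dividing by $p$ yields exactly the orthogonality relation \eqref{ThChar-2-0}.

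The ``if'' direction is where the convexity pays off and where I expect the main obstacle. Suppose \eqref{ThChar-2-0} holds for all $Q \in \PPn{n-1}$; I want to conclude $L_n$ is the extremal polynomial. Let $R_n \in \PP^*_n$ be any monic competitor and write $Q := L_n - R_n \in \PPn{n-1}$. The cleanest argument is to use the convexity inequality for the function $s \mapsto |s|^p$: for $1<p<\infty$ one has, for all real $a,b$,
\[
|b|^p \;\ge\; |a|^p + p\,\sgn{a}\,|a|^{p-1}\,(b-a),
\]
since $|s|^p$ is convex and the right-hand side is its supporting tangent line at $s=a$. Applying this pointwise with $a = L_n^{(k)}(x)$, $b = R_n^{(k)}(x)$, integrating against $d\mu_k$, and summing over $k$ gives
\[
\|R_n\|_{S,p,m}^p \;\ge\; \|L_n\|_{S,p,m}^p + p\sum_{k=0}^m \int \sgn{L_n^{(k)}}\,|L_n^{(k)}|^{p-1}\,(R_n^{(k)} - L_n^{(k)})\, d\mu_k.
\]
The summation on the right is precisely $-p\,\langle Q, L_n\rangle_{S,p,m}$ with $Q = L_n - R_n$, which vanishes by hypothesis. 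Hence $\|R_n\|_{S,p,m}^p \ge \|L_n\|_{S,p,m}^p$ for every monic $R_n$, so $L_n$ is extremal.

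The main obstacle is making the differentiation-under-the-integral step in the first direction fully rigorous, specifically dominating the difference quotients of $|L_n^{(k)}+tQ^{(k)}|^p$ uniformly in $t$ near $0$. Since the measures $\mu_k$ are finite and the polynomials $L_n^{(k)}, Q^{(k)}$ are bounded on the compact support, a crude bound on $\big|\frac{d}{dt}|L_n^{(k)}+tQ^{(k)}|^p\big| \le p\,|L_n^{(k)}+tQ^{(k)}|^{p-1}|Q^{(k)}|$ for $|t|\le 1$ gives an integrable dominating function, legitimizing the interchange via dominated convergence; the set where $L_n^{(k)}$ vanishes is negligible for this estimate since $p-1>0$. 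I note that the ``if'' direction can alternatively be phrased so as to avoid differentiation entirely (using only the tangent-line inequality above), which is why I would present the convexity argument as the backbone and invoke uniqueness from Proposition \ref{Propo_ExUn} only if needed to identify the minimizer; in fact the convexity inequality already delivers both existence-of-minimum characterization and, combined with strict convexity of $|s|^p$, the uniqueness.
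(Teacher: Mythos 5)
Your proposal is correct, and in the necessity direction it is essentially the paper's own argument: both define $F(\alpha)=\|L_n+\alpha Q\|_{S,p,m}^p$, use that extremality of $L_n$ forces a minimum of $F$ at $\alpha=0$, and conclude $0=F'(0)=p\,\langle Q,L_n\rangle_{S,p,m}$; the only difference is that you justify the differentiation under the integral sign (via a dominated bound on the difference quotients, valid because $p>1$), a step the paper carries out without comment. In the sufficiency direction you take a genuinely different route. The paper's proof is a duality argument: writing an arbitrary monic competitor as $\widetilde{Q}=L_n+Q$ with $Q\in\PPn{n-1}$ and setting $G_k=\sgn{L_n^{(k)}}\,|L_n^{(k)}|^{p-1}\in\Lq(\mu_k)$, the hypothesis \eqref{ThChar-2-0} gives $\|L_n\|_{S,p,m}^p=\langle \widetilde{Q},L_n\rangle_{S,p,m}=\sum_{k=0}^m\int \widetilde{Q}^{(k)}G_k\,d\mu_k$, which is then estimated by H\"older's inequality followed by the scalar Young inequality \eqref{ThChar-2-3}, yielding $\|L_n\|_{S,p,m}^p\le \|\widetilde{Q}\|_{S,p,m}^p/p+\|L_n\|_{S,p,m}^p/q$ and hence $\|L_n\|_{S,p,m}\le\|\widetilde{Q}\|_{S,p,m}$. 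You instead integrate the pointwise supporting-line inequality $|b|^p\ge |a|^p+p\,\sgn{a}\,|a|^{p-1}(b-a)$ for the convex function $s\mapsto |s|^p$, with $a=L_n^{(k)}$ and $b=R_n^{(k)}$, and observe that the resulting first-order term is $-p\,\langle L_n-R_n,L_n\rangle_{S,p,m}=0$ since $L_n-R_n\in\PPn{n-1}$. Both devices are standard for $L^p$ extremal problems and of comparable length; your convexity argument is somewhat more elementary (one pointwise inequality, no pairing with the dual elements $G_k$, no H\"older), and, as you note, its strict version also delivers uniqueness of the minimizer, which the paper establishes separately in Proposition \ref{Propo_ExUn} through the equality case of Minkowski's inequality. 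The paper's version, in turn, runs exactly parallel to the classical characterization \eqref{ThChar-Lp} for the $\Lp{\mu_0}$ problem and exhibits the extremal (norming) functional explicitly.
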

\begin{proof}
 Assume that $L_{n}$ is the $n$th  \emph{monic extremal polynomial relative to}  $\|\cdot\|_{S,p,m}$  and let $Q \in \PPn{n-1}$, then
\begin{equation}\label{ThChar-2-1}
   \|L_{n}\|_{S,p,m}\leq \|L_{n}+ \alpha \,Q\|_{S,p,m}, \quad \mbox{ for all } \alpha \in \RR.
\end{equation}
Let $F(\alpha)$ be the auxiliary function defined for all $\alpha \in \RR$  by the expression
$$F(\alpha)=\|L_{n}+\alpha Q\|^p_{S,p,m}=\sum_{k=0}^m\int \left|L_{n}^{(k)}+\alpha Q^{(k)}\right|^pd\mu_k.$$
From Proposition \ref{Propo_ExUn} and \eqref{ThChar-2-1}, $\alpha=0$ is the unique minimum point of $F$, thus
\begin{eqnarray*}
0 = F^\prime(0)=  p \, \sum_{k=0}^m\int Q^{(k)}\sgn{L_n^{(k)}}\left|L_n^{(k)}\right|^{p-1}d\mu_k=p \, \langle Q, L_n\rangle_{S,p,m}
\end{eqnarray*}
and we get \eqref{ThChar-2-0}.
Now, assume that \eqref{ThChar-2-0} takes place for every polynomial $Q \in \PPn{n-1}$.  Obviously, each monic polynomial $\widetilde{Q}$ of degree $n$ can be written as the sum $\widetilde{Q}=L_{n}+Q$ where $Q \in \PPn{n-1}$.\\
Let $q$  be the conjugate exponent of $p$, i.e. $q= \frac{p}{p-1}$. For $k=0,1,\cdots ,m$ we have
\begin{eqnarray*}
G_k= \sgn{L^{(k)}_{n,p}}\,|L^{(k)}_{n,p}|^{p-1} \in \Lq(\mu_k),\quad \int\left|G_k\right|^{q} d\mu_k= \|L_{n}\|_{k,p}^{p},
\end{eqnarray*}
Thus
\begin{align}\label{ThChar-2-2}
\|L_{n}\|_{S,p,m}^p = \sum_{k=0}^m\|L_{n}\|_{k,p}^{p}=\sum_{k=0}^m\int \left|G_k\right|^{q} d\mu_k=\sum_{k=0}^m\|G_k\|_{k,q}^q.
\end{align}
Let $\alpha,\beta \geq 0$, $p>1$ and $q= \frac{p}{p-1}$. It is well known that
\begin{equation}\label{ThChar-2-3}
    \sqrt[p]{\alpha}\cdot  \sqrt[q]{\beta} \leq \frac{\alpha}{p}+ \frac{\beta}{q},
\end{equation}
with equality if and only if $\alpha=\beta$ (cf. \cite[\S 2.1.1--Thm. 2]{MiPe70}).
From \eqref{ThChar-2-0}, H\"{o}lder's inequality, \eqref{ThChar-2-3} and \eqref{ThChar-2-2}, we get
\begin{eqnarray*}
 \|L_{n}\|_{S,p,m}^p &=& \langle L_{n},L_{n}\rangle_{S,p,m}=\langle L_{n}+Q,L_{n}\rangle_{S,p,m}=\langle \widetilde{Q},L_{n}\rangle_{S,p,m},\\
   &=&  \sum_{k=0}^m\int \widetilde{Q}^{(k)}G_kd\mu_k,\leq \sum_{k=0}^m\|\widetilde{Q}^{(k)}\|_{k,p}\cdot \|G_k\|_{k,q},\\
&\leq& \sum_{k=0}^m\left(\frac{\|\widetilde{Q}^{(k)}\|_{k,p}^p}{p}+ \frac{\|G_k\|_{k,q}^q}{q}\right)= \frac{\|\widetilde{Q}\|_{S,p,m}^p}{p}+ \frac{\|L_{n}\|_{S,p,m}^p}{q}.
\end{eqnarray*}
Thus $ \|L_{n}\|_{S,p,m} \leq \|\widetilde{Q}\|_{S,p,m}$, which completes the proof.
\end{proof}

\begin{corollary} Under the assumptions of Theorem \ref{ThChar-p}, if $n\geq 1$ then  $L_{n}$  has at least one zero of odd multiplicity  $x_0 \in \intch{\sopor{\mu_0}}$.  \label{Cor-UnCero}
\end{corollary}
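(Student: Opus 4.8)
The plan is to feed the characterization in Theorem \ref{ThChar-2} the simplest possible admissible test polynomial, the constant $Q\equiv 1$. This is legitimate precisely because the hypothesis $n\geq 1$ gives $\PPn{n-1}\supseteq\PPn{0}\ni 1$. Since every derivative $Q^{(k)}$ with $k\geq 1$ vanishes identically, all the Sobolev terms in \eqref{ThChar-2-0} with $k\geq 1$ drop out, and the orthogonality relation collapses to
\[
\int \sgn{L_n}\,|L_n|^{p-1}\,d\mu_0 = 0 .
\]
The effect of this choice is to neutralize the entire Sobolev (derivative) structure and reduce the question, for this one test function, to exactly the scalar integral condition that governs the classical $L^p$ sign-change argument recalled in the introduction.

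Next I would argue by contradiction. Write $\ch{\sopor{\mu_0}}=[a,b]$, an interval of $\RR$, so that $\intch{\sopor{\mu_0}}=(a,b)$, and suppose $L_n$ has no zero of odd multiplicity in $(a,b)$. Then $L_n$ has no sign change on $(a,b)$: any real zero lying there has even multiplicity and hence preserves the sign, so $L_n$ keeps a fixed sign throughout $(a,b)$, and by continuity on the closure $[a,b]\supseteq\sopor{\mu_0}$. Assume without loss of generality that $L_n\geq 0$ on $\sopor{\mu_0}$. Then the integrand $\sgn{L_n}\,|L_n|^{p-1}$ is nonnegative $\mu_0$-a.e.\ and vanishes only at the finitely many real zeros $Z$ of $L_n$; because $\sopor{\mu_0}$ is infinite it cannot be contained in the finite set $Z$, so $\mu_0(\RR\setminus Z)>0$ and the integral is \emph{strictly} positive, contradicting the displayed relation.

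Consequently $L_n$ must take values of both signs on $\sopor{\mu_0}$, i.e.\ there exist $s,t\in\sopor{\mu_0}\subseteq[a,b]$ with $L_n(s)>0>L_n(t)$. By the intermediate value theorem $L_n$ changes sign—hence has a zero $x_0$ of odd multiplicity—strictly between $s$ and $t$, and the chain $a\leq\min(s,t)<x_0<\max(s,t)\leq b$ places $x_0$ in the open interval $(a,b)=\intch{\sopor{\mu_0}}$, which is exactly the assertion.

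The only step requiring genuine care, and the one I would flag as the main (though modest) obstacle, is the endpoint bookkeeping in this last paragraph: one must be sure the zero delivered by the intermediate value theorem falls in the \emph{open} convex-hull interior and not at $a$ or $b$. This is settled by noting that $x_0$ lies strictly between two support points, so it cannot coincide with either extreme endpoint of $\ch{\sopor{\mu_0}}$. No difficulty is caused by the Sobolev norm itself, since the test function $Q\equiv 1$ removes every derivative contribution from the outset.
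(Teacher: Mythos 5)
Your proposal is correct and follows essentially the same route as the paper: the paper's proof consists precisely of testing Theorem \ref{ThChar-2} against $Q\equiv 1$ to get $\int \sgn{L_{n}}\,|L_{n}|^{p-1}\, d\mu_0 = \langle 1, L_{n}\rangle_{S,p,m}=0$, and declares the conclusion immediate. Your second and third paragraphs merely spell out the details the paper suppresses (a fixed sign of $L_n$ on the support would force a strictly positive integral because $\sopor{\mu_0}$ is infinite, so $L_n$ changes sign strictly between two support points, hence inside $\intch{\sopor{\mu_0}}$), which is exactly the intended argument.
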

\begin{proof}  This is an immediate consequence of  $$    \int \sgn{L_{n}}\,|L_{n}|^{p-1} d\mu_0 =\langle 1, L_{n}  \rangle_{S,p,m}=0.$$
\end{proof}

\begin{corollary} Under the assumptions of Theorem \ref{ThChar-p}, if $n\geq 2$ then $L^{\prime}_{n}$  has at least one zero of odd multiplicity in $\intch{\sopor{\mu_0}\cup \sopor{\mu_1}}$. \label{Cor-UnPCrit}
\end{corollary}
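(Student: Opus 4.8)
The plan is to argue by contradiction and to reduce the statement to a sign analysis through the Characterization Theorem, using the test polynomial $Q(x)=x-x_0$, where $x_0$ is the odd--multiplicity zero of $L_{n}$ provided by Corollary \ref{Cor-UnCero}.

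First I would suppose that $L_{n}^{\prime}$ has no zero of odd multiplicity in the open interval $J:=\intch{\sopor{\mu_0}\cup \sopor{\mu_1}}$. Since a real polynomial changes sign exactly at its odd--multiplicity real zeros, this means that $L_{n}^{\prime}$ keeps a constant sign $\varepsilon\in\{+1,-1\}$ throughout the closed interval $\ch{\sopor{\mu_0}\cup \sopor{\mu_1}}=:[A,B]$ (it may vanish at isolated points, but it never crosses zero there). As $\sopor{\mu_0}$ is infinite, $[A,B]$ is nondegenerate and $L_{n}$ is strictly monotone on it. On the other hand, Corollary \ref{Cor-UnCero} furnishes an odd--multiplicity zero $x_0$ of $L_{n}$ with $x_0\in \intch{\sopor{\mu_0}}\subseteq (A,B)$. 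Strict monotonicity forces $x_0$ to be the \emph{unique} zero of $L_{n}$ in $[A,B]$ and yields the sign pattern $\sgn{L_{n}(x)}=\varepsilon\,\sgn{x-x_0}$ for every $x\in[A,B]$.

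Next I would feed the admissible test polynomial $Q(x)=x-x_0\in\PPn{n-1}$ into Theorem \ref{ThChar-p}; here the hypothesis $n\ge 2$ is used, so that $\dgr{Q}=1\le n-1$. Because $Q^{\prime}\equiv 1$ and $Q^{(k)}\equiv 0$ for $k\ge 2$, the identity $\langle Q,L_n\rangle_{S,p,m}=0$ collapses to
\begin{equation*}
\int (x-x_0)\,\sgn{L_{n}(x)}\,|L_{n}(x)|^{p-1}\,d\mu_0(x)+\int \sgn{L_{n}^{\prime}(x)}\,|L_{n}^{\prime}(x)|^{p-1}\,d\mu_1(x)=0 .
\end{equation*}
Now I substitute the sign information. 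Since $\sopor{\mu_0}\subseteq [A,B]$, the first integrand equals $(x-x_0)\,\sgn{L_{n}(x)}=\varepsilon\,|x-x_0|$, so the first integral equals $\varepsilon\int |x-x_0|\,|L_{n}|^{p-1}d\mu_0$, which has sign $\varepsilon$ and is strictly nonzero because $\sopor{\mu_0}$ is infinite (the integrand is positive off the finite zero set of $(x-x_0)L_{n}$). Since $\sopor{\mu_1}\subseteq [A,B]$, on the second integrand $\sgn{L_{n}^{\prime}}=\varepsilon$ wherever $L_{n}^{\prime}\ne 0$, so the second integral equals $\varepsilon\int |L_{n}^{\prime}|^{p-1}d\mu_1$, which has sign $\varepsilon$ or vanishes. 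Hence the left--hand side has the form $\varepsilon\cdot(\text{strictly positive}+\text{nonnegative})\ne 0$, contradicting the displayed identity.

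The heart of the argument, and the step I expect to be the main obstacle to present cleanly, is the passage from ``no sign change of $L_{n}^{\prime}$ on $J$'' to the explicit sign pattern $\sgn{L_{n}}=\varepsilon\,\sgn{x-x_0}$, which couples the monotonicity of $L_{n}$ with the existence result of Corollary \ref{Cor-UnCero}. The decisive observation is that, for the choice $Q=x-x_0$, the two integrals do not cancel but \emph{reinforce} each other (both carry the sign $\varepsilon$), so the characterization identity cannot hold; note that this remains valid even in the degenerate situation where $\mu_1$ charges only zeros of $L_{n}^{\prime}$, since the $\mu_0$--integral alone is already nonzero.
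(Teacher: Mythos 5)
Your proposal is correct and follows essentially the same route as the paper: argue by contradiction, use Corollary \ref{Cor-UnCero} to produce the sign-change zero $x_0$, observe that constancy of $\sgn{L_n^{\prime}}$ makes $L_n$ monotone with $\sgn{L_n(x)}=\varepsilon\,\sgn{x-x_0}$, and then test Theorem \ref{ThChar-p} with $Q(x)=x-x_0$ (the paper uses $Q(x)=c(x-x_0)$, which is the same up to absorbing the sign $\varepsilon$) to get a strictly positive quantity equal to zero. Your additional care about isolated even-multiplicity zeros of $L_n^{\prime}$ and the degenerate case where $\mu_1$ only charges such zeros is a welcome refinement, but the underlying argument is identical.
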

\begin{proof} Suppose  that $L_{n}^{\prime}$  has no zeros of odd multiplicity on $\intch{\sopor{\mu_0} \cup \sopor{\mu_1}}$,
then $L_{n}$ is  monotone on $\intch{\sopor{\mu_0} \cup \sopor{\mu_1}}$. From Corollary \ref{Cor-UnCero}, $L_n$ has exactly one zero $x_0$  of odd multiplicity on $\intch{\sopor{\mu_0}}$, so  $\sgn{(x-x_0)L_{n}(x)}=\sgn{L_n^{\prime}(x)}=c$ is constant for all $x \in \intch{\sopor{\mu_0}}$ with $c=\pm 1$. Hence, by Theorem \ref{ThChar-p} we have
\begin{align*}
0& =\langle c(x-x_0), L_n \rangle_{S,p,m}\\ &=\int c(x-x_0)\sgn{L_{n}}\,|L_{n}|^{p-1} d\mu_0+\int c\cdot\sgn{L^{\prime}_{n}}\,|L^{\prime}_{n}|^{p-1} d\mu_1 > 0
\end{align*}
which is a contradiction.
\end{proof}

\section{Two disjoint intervals. Proof of  Theorem \ref{theo-funda}}\label{Sec-2Intervals}

Here, we prove the results on the location of zeros and critical points announced previously. The first lemma in this section is a consequence of Biernacki`s theorem \cite[Th. 4.5.2]{RahSch02}, which in turn is a converse of the Gauss-Lucas theorem.
\begin{lemma}[\textbf{Biernacki}]   \label{Lemma-Biernacki}\emph{\cite[Thm. 4.5.2]{RahSch02}} Let $K$ be the convex hull of the critical points of a polynomial $f$, and let $f(z_0)=0$. Then, the zeros of $f$ lie in the union of all the closed disks centered at the vertices  of $K$ and radius equal to the distance from the vertex to $z_0$.
\end{lemma}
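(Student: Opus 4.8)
\emph{Preliminary remark.} This is Biernacki's classical converse to the Gauss--Lucas theorem, so the direct route is simply to invoke \cite[Thm.~4.5.2]{RahSch02}; the plan below indicates how a self-contained argument would run.

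The plan is to reduce the statement to a mean-value identity and then argue by contraposition. Fix a zero $z^*$ of $f$; the case $z^*=z_0$ is trivial, since $z_0$ lies on the boundary circle of every vertex disk. Assume $z^*\neq z_0$, set $\gamma(s)=(1-s)z_0+sz^*$, and integrate the derivative of $s\mapsto f(\gamma(s))$ over $[0,1]$. The fundamental theorem of calculus gives
\[(z^*-z_0)\int_0^1 f'(\gamma(s))\,ds = f(z^*)-f(z_0)=0,\]
so that $\int_0^1 f'(\gamma(s))\,ds=0$: the average of $f'$ along the segment $[z_0,z^*]$ vanishes.

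Next I would translate the ``outside all disks'' hypothesis into a half-plane condition on the critical points. Suppose, toward a contradiction, that $z^*$ lies outside every closed disk $\overline{D}(v,|v-z_0|)$ centered at a vertex $v$ of $K=\ch{\{\text{critical points of }f\}}$. For each such vertex, $|z^*-v|>|v-z_0|$ says exactly that $v$ is strictly closer to $z_0$ than to $z^*$, i.e. $v$ lies in the open half-plane $H$ bounded by the perpendicular bisector of $[z_0,z^*]$ and containing $z_0$. Since $H$ is convex and contains every vertex of $K$, it contains all of $K$, hence \emph{all} the critical points of $f$ lie in $H$.

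The hard part will be the final step: showing that if all zeros of $f'$ lie in $H$, then the segment-average of $f'$ cannot vanish, contradicting the identity above. After an affine change of variables sending $z_0\mapsto 0$ and $z^*\mapsto 1$, this becomes the assertion that, if every zero $w$ of $f'$ satisfies $\ree{w}<\tfrac12$, then $\int_0^1 f'(t)\,dt\neq 0$; equivalently, passing to a primitive, \emph{a polynomial with zeros at $0$ and $1$ must possess a critical point with real part $\geq\tfrac12$}. This is precisely a Grace--Heawood / Walsh-coincidence phenomenon and is the genuine kernel of the theorem. I would obtain it from Grace's apolarity theorem (or from Laguerre's theorem on polar derivatives) applied to the apolar pair formed by $f'$ and the polynomial whose pairing with $f'$ reproduces the mean over $[0,1]$. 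The naive half-plane estimate that handles a single linear factor $t-w$ fails for the product $\prod_k(t-w_k)$, since the arguments may wind past $\pi$; this is exactly why the step requires an apolarity argument rather than a direct sign computation, and is where I expect the main difficulty to lie.
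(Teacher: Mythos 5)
The paper offers no proof of this lemma: it is stated as a quoted classical result, with the reference to \cite[Thm.~4.5.2]{RahSch02} serving as its entire justification, so your primary route (invoking that citation) is exactly what the paper does. Your self-contained sketch is also the standard textbook argument and its reductions (mean-value identity, vertex-disk condition translated into an open half-plane containing all critical points) are correct; the deferred final step does indeed follow from Grace's apolarity theorem as you predict, since after normalizing $z_0\mapsto -1$, $z^*\mapsto 1$, $\deg f = n$, the condition $\int_{-1}^{1}f'(t)\,dt=0$ says precisely that $f'$ is apolar to $(z+1)^{n}-(z-1)^{n}$, whose zeros $-i\cot(k\pi/n)$, $k=1,\dots,n-1$, all lie on the perpendicular bisector, so by Grace's theorem the open half-plane you constructed (which contains every zero of $f'$ but no point of the bisector) would have to contain one of them --- a contradiction.
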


Let $I\subset \RR$ be an interval and $Q \in \PP$. As in Theorem \ref{theo-funda}, $\Nsign(Q;I)$  denotes the number of zeros of $Q$ on $I$ with odd multiplicity (i.e. points of sign change). Additionally, $\Nzero(Q;I)$ denotes the total  number of zeros (counting multiplicities) of $Q$ on $I$ and for all  $n\geq 1$ we write $\ell_n:=\Nsign(L_{n};\inter{\Delta_0}) + \Nsign(L_{n}^{\prime};\inter{\Delta_1})$. The key to the proof of Corollary \ref{Theo_ZerCrit} is the following trivial consequence of Rolle`s theorem.

\begin{lemma}\emph{\cite[Lemma 2.1]{LoPiPe01}} \label{lemma-Rolle} Let $I$ be an interval of the real line and $Q$ a non-constant polynomial of degree $n$ with real coefficients, then  $\dsty  \Nzero(Q;I) + \Nzero(Q^{\prime};\CC \setminus I) \leq n. $
\end{lemma}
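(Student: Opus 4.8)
The plan is to rewrite the claimed inequality as a statement purely about the zeros of $Q'$ inside $I$, and then to establish that statement by a standard Rolle's theorem count. Since $Q$ is non-constant of degree $n$, its derivative $Q'$ has degree $n-1$ and hence exactly $n-1$ zeros in $\CC$ counted with multiplicity; consequently $\Nzero(Q';\CC\setminus I) = (n-1) - \Nzero(Q';I)$. Substituting this into the desired inequality $\Nzero(Q;I)+\Nzero(Q';\CC\setminus I)\leq n$ and simplifying, I find that it is equivalent to
\[
\Nzero(Q;I) \leq \Nzero(Q';I) + 1.
\]
So everything reduces to bounding the number of zeros of $Q$ in $I$ by one more than the number of zeros of $Q'$ in $I$.

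To prove this reduced inequality, I would list the distinct zeros of $Q$ lying in $I$ as $x_1 < x_2 < \cdots < x_r$, with respective multiplicities $m_1,\dots,m_r$, so that $\Nzero(Q;I) = \sum_{j=1}^r m_j$. Each $x_j$ is then a zero of $Q'$ of multiplicity $m_j - 1$, contributing $\sum_{j=1}^r (m_j - 1) = \Nzero(Q;I) - r$ zeros of $Q'$ in $I$. In addition, Rolle's theorem provides, in each of the $r-1$ open gaps $(x_j, x_{j+1})$, at least one further zero of $Q'$; since $I$ is an interval and both endpoints of each gap lie in $I$, these gaps are contained in $I$, and their Rolle zeros are automatically distinct from the points $x_j$. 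Adding the two disjoint contributions yields
\[
\Nzero(Q';I) \geq \bigl(\Nzero(Q;I) - r\bigr) + (r-1) = \Nzero(Q;I) - 1,
\]
which is exactly the reduced inequality, and the lemma follows.

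The only point requiring a little care — and the place where the hypotheses are genuinely used — is the application of Rolle's theorem: it needs the zeros $x_j$ to be real and the connecting gaps to lie inside $I$, both of which hold because $Q$ has real coefficients and $I$ is an interval of $\RR$. Beyond that I anticipate no real obstacle; the argument is entirely elementary, and the only substantive content is the algebraic reduction to $\Nzero(Q;I)\leq \Nzero(Q';I)+1$ together with the bookkeeping that keeps the multiplicity contributions at the $x_j$ separate from the Rolle zeros in the gaps (so that no zero is counted twice).
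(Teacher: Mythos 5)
Your proof is correct and follows essentially the same route as the paper's source for this lemma: the paper gives no proof itself, citing \cite[Lemma 2.1]{LoPiPe01} and describing the result as a consequence of Rolle's theorem, and the proof there is precisely this count --- zeros of $Q'$ of multiplicity $m_j-1$ at each multiple zero $x_j$ of $Q$, plus one Rolle zero in each gap $(x_j,x_{j+1})\subset I$, giving $\Nzero(Q';I)\geq \Nzero(Q;I)-1$, which combined with $\Nzero(Q';\CC)=n-1$ yields the stated inequality. Your explicit reduction to $\Nzero(Q;I)\leq \Nzero(Q';I)+1$ and the care taken that the gap zeros are distinct from the $x_j$ are exactly the right bookkeeping.
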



\begin{proof}[Proof of Theorem \ref{theo-funda}] For $n=1,2$ the statements of the Lemma are immediate consequences of Corollaries \ref{Cor-UnCero} and \ref{Cor-UnPCrit}.
So, in the sequel we assume that $n\geq 3$. From Lemma \ref{lemma-Rolle} we have $l_n\leq n$. The simplicity of the zeros of $L_n$ in $\inter{\Delta_0}$ follows directly from
the inequality $\ell_n \geq n-1$ and Lemma \ref{lemma-Rolle}. Therefore, to complete the proof of the statement 1.1 it suffices to show that
\begin{align}\label{4.1}
\ell_n \geq n-1.
\end{align}
Without loss of generality, we can assume that $\inter{\Delta_0} = (a,b)$, $\inter{\Delta_1} = (c,d)$ and $-\infty \leq a < b\leq c \leq d \leq \infty$, the case $d \leq a$ is solved similarly.

Fix $n \geq 3$ and let $x_0$ be the point in $(a,b)$ closest to $[c,d]$ where $L_{n}$ changes sign. This point exists due to Corollary \ref{Cor-UnCero}. There are two possible cases, either
\begin{equation}
\label{LocCriCaso1}\tag{I}
  \sgn{L_{n}^{\prime}(x_0+ \epsilon) \cdot L_{n}^{\prime}(c+ \epsilon)}=1
\end{equation}
for all sufficiently small $\epsilon > 0$, or
\begin{equation}
\label{LocCriCaso2}\tag{II}
\sgn{L_{n}^{\prime}(x_0+ \epsilon) \cdot L_{n}^{\prime}(c+ \epsilon)}=-1
\end{equation}
for all sufficiently small $\epsilon > 0$. Let us consider  each case separately.

In case \ref{LocCriCaso1} we can prove more than \eqref{4.1}; namely,
\begin{equation}
\label{lemma-4_1}
 \ell_n=n \;.
\end{equation}
To the contrary, suppose that $\ell_n\leq n-1$ in case \ref{LocCriCaso1} or $\ell_n\leq n-2$ in case \ref{LocCriCaso2}. We shall see that we can find a polynomial $Q\in\PPn{n-1}$ such that
\begin{align}\label{sign}
Q(x)L_{n}(x)\geq 0, \quad x\in [a,b] && \text{ and } && Q^{\prime}(x)L_{n}^{\prime}(x) \geq 0, \quad  x \in [c,d].
\end{align}

Suppose that \eqref{sign} holds, using Theorem \ref{ThChar-1} we get
\begin{align*}
0 &= \langle Q,L_{n} \rangle _{S,p,1}\\
&=\int_a^b Q\,\sgn{L_{n}}\,|L_{n}|^{p-1}d\mu_0
     +
\int_c^d Q^{\prime}\sgn{L_{n}^{\prime}}|L_{n}^{\prime}|^{p-1}d\mu_1>0,
\end{align*}
which is a contradiction and the proof of \textit{\ref{theo-funda}.1.} would be complete. Therefore, it is sufficient to find such a polynomial $Q$.

\medskip\noindent
\textbf{Case I}

\medskip
Suppose that $\ell_n\leq n-1$ and take $Q$ to be a polynomial of degree $\leq\ell_n$ with real coefficients, not identically equal to zero, which
has a zero at each point of $(a,b)$ where $L_{n}$ changes sign and whose derivative has a zero at each point of $(c,d)$ where $L_{n}^{\prime}$ changes sign. The existence of $Q$ reduces to solving a system of $\ell_n$ equations on $\ell_n+1$ unknowns (the coefficients of $Q$); thus, a non trivial solution always exists. Notice that
$$
\ell_n \leq \Nzero(Q;(a,b)) + \Nzero(Q^{\prime};(c,d))
$$
with strict inequality if either $Q$ (resp. $Q^{\prime}$) has on $(a,b)$ (resp. $(c,d)$) zeros of multiplicity greater than one or distinct
from those assigned by construction. On the other hand, because of Corollary \ref{Cor-UnCero} the degree of $Q$ is at least $1$; therefore, using Lemma \ref{lemma-Rolle}, we have that
$$
\ell_n \leq \Nzero(Q;(a,b)) + \Nzero(Q^{\prime};(c,d)) \leq \dgr{Q} \leq \ell_n.
$$
Thus
\begin{equation}\label{degQ}
\ell_n = \Nzero(Q;(a,b)) + \Nzero(Q^{\prime};(c,d)) = \dgr{Q} \;.
\end{equation}
Hence $Q$ (resp. $Q^{\prime}$) has on $(a,b)$ (resp. $(c,d)$) simple zeros and has no other zero different from those given by construction. So, $QL_{n}$ and $Q^{\prime}L_{n}^{\prime}$ have constant sign on $[a,b]$ and $[c,d]$, respectively. We can choose $Q$ in such a way that $QL_{n} \geq 0$ on $[a,b]$ (if this was not so replace $Q$ by $-Q$). Then, to prove \eqref{sign} it  remains to check that $\sgn{Q^{\prime}(c+\epsilon)L_{n}^{\prime}(x_0+\epsilon)}=1$ for all $\epsilon$ sufficiently small. From Rolle's Theorem and \eqref{degQ} we have
\begin{align*}
\ell_n-1&= \Nsign(Q;(a,x_0)) + \Nsign(Q^{\prime};(c,d)) \\
&\leq \Nsign(Q^{\prime};(a,x_0))+ \Nsign(Q^{\prime};(c,d)) \leq\ell_n-1
\end{align*}
Hence $\Nsign(Q^{\prime};(a,x_0)) + \Nsign(Q^{\prime};(c,d))=\dgr{Q^{\prime}}$ and all the zeros of $Q^{\prime}$ are contained in $(a,x_0) \cup (c,d)$. So, for all $\epsilon$ sufficiently small, we have
$$\sgn{Q^{\prime}(x_0+\epsilon)\cdot Q^{\prime}(c+\epsilon)}=1.$$
Now, from this expression  and \eqref{LocCriCaso1}, we obtain
\begin{align*}
\sgn{Q^{\prime}(c+\epsilon)L_{n}^{\prime}(c+\epsilon)}&=\sgn{Q^{\prime}(c+\epsilon)}\sgn{L_{n}^{\prime}(c+\epsilon)}\\
&=\sgn{Q^{\prime}(x_0+\epsilon)}\sgn{L_{n}^{\prime}(x_0+\epsilon)}\\
&=\sgn{Q(x_0+\epsilon)}\sgn{L_{n}(x_0+\epsilon)}\\
&=\sgn{Q(x_0+\epsilon)L_{n}(x_0+\epsilon)} =1.
\end{align*}
Therefore, we get \eqref{sign} and hence \eqref{lemma-4_1} ($\ell_n=n$).

In order to prove in this case the remaining statements, notice that
\begin{align*}
n-1&=l_n-1 = \Nsign(L_{n};(a,x_0)) + \Nsign(L_{n}^{\prime};(c,d))\\
   & \leq  \Nsign(L^{\prime}_{n};(a,x_0))+ \Nsign(L_{n}^{\prime};(c,d))+\Nsign(L_{n}^{\prime};(x_0,c])\leq n-1.
\end{align*}
Therefore, $\Nsign(L_{n}^{\prime};(a,d))=n-1$, $\Nsign(L_{n}^{\prime};(x_0,c])=0$, and $\Nsign(L_{n};(a,x_0))=\Nsign(L_{n}^{\prime};(a,x_0))$ which proves \textit{\ref{theo-funda}.2}, \textit{\ref{theo-funda}.3} and \textit{\ref{theo-funda}.4}.

\medskip\noindent
\textbf{Case II}

\medskip
Suppose that $\ell_n\leq n-2$. The difference consists in that to the right of $x_0$ and $c$ the polynomial
$L_{n}^{\prime}$ has different signs. Here, we construct $Q$ of degree $ \leq \ell_n+1\leq n-1$ with real coefficients, not identically equal to zero, with the same interpolation conditions as above plus $Q^{\prime}(c) =0$. Following the same line of reasoning, we have
$$
\ell_n+1 = \Nzero(Q;(a,b)) + \Nzero(Q^{\prime};[c,d)) = \dgr{Q} \;.
$$
Hence $Q$ (resp. $Q^{\prime}$) has on $(a,b)$ (resp. $[c,d)$) simple zeros and no other zero except
those given by construction. So $QL_{n}$ and $Q^{\prime}L_{n}^{\prime}$ have constant sign on $[a,b]$ and $[c,d]$, respectively. Analogous to the previous case, $\Nsign(Q^{\prime};(a,x_0)) + \Nsign(Q^{\prime};[c,d))=\dgr{Q^{\prime}}$ and all the zeros of $Q^{\prime}$ are contained in $(a,x_0) \cup [c,d)$. Now, using that $Q$ changes sign at $c$   and (\ref{LocCriCaso2}) we obtain
\begin{align*}
\sgn{Q^{\prime}(c+\epsilon)L_{n}^{\prime}(c+\epsilon)}&=\sgn{Q^{\prime}(c+\epsilon)}\sgn{L_{n}^{\prime}(c+\epsilon)}\\
&=-\sgn{Q^{\prime}(x_0+\epsilon)}\left(-\sgn{L_{n}^{\prime}(x_0+\epsilon)}\right)\\
&=\sgn{Q(x_0+\epsilon)}\sgn{L_{n}(x_0+\epsilon)}\\
&=\sgn{Q(x_0+\epsilon)L_{n}(x_0+\epsilon)}=1
\end{align*}
which proves that $Q$ satisfies \eqref{sign} and hence \eqref{4.1} is true.

Now, notice that \eqref{LocCriCaso2} and  the intermediate value theorem imply that  $L_{n}^{\prime}$ has at least an odd zero on the interval $(x_0,c]$, thus
\begin{align*}
n-1&\leq (\ell_n-1) +1   = \Nsign(L_{n};(a,x_0)) + \Nsign(L_{n}^{\prime};(c,d)) +1\\
    & \leq \Nsign(L_{n}^{\prime};(a,x_0))  + \Nsign(L_{n}^{\prime};(c,d))+\Nsign(L_{n}^{\prime};(x_0,c])\leq n-1
\end{align*}
Therefore, $\Nsign(L_{n}^{\prime};(a,d))=n-1$, $\Nsign(L_{n}^{\prime};(x_0,c])=1$, and $\Nsign(L_{n};(a,x_0))=\Nsign(L_{n}^{\prime};(a,x_0))$ from which  \textit{\ref{theo-funda}.2}, \textit{\ref{theo-funda}.3}, and \textit{\ref{theo-funda}.4} follow.
\end{proof}
\begin{corollary}
\label{Theo_ZerCrit}
Let $p\in (1,\infty)$ and let $\mu _0,\mu _1$ be finite positive Borel measures defined on the real line such that $\inter{\Delta_0}\cap\inter{\Delta_1}=\emptyset $
If we take $[a,b]=\Delta_0$ and $[c,d]=\Delta_1$, where $a,b,c,d \in \RR$, then the zeros of $L_{n}$ lie in
\begin{equation}\label{BiernackiCompact}
D_{\Delta}=\begin{cases}
\D(d,d-a)\cup \D(a,b-a), & \text{if } b \leq c,\\
\D(c,b-c)\cup \D(b,b-a), & \text{if } d \leq  a,\\
\end{cases}.
\end{equation}
where $ \D(a,r)=\{z\in\CC: |z-a|<r\}$.
\end{corollary}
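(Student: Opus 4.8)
The plan is to deduce the corollary from Biernacki's theorem (Lemma \ref{Lemma-Biernacki}) applied to $f=L_n$, using as input the location of the critical points from Theorem \ref{theo-funda} and the existence of a real zero from Corollary \ref{Cor-UnCero}. By the reflection symmetry of the whole construction I would treat only the case $b\le c$ (so that $a<b\le c<d$); the case $d\le a$ is entirely analogous after interchanging the roles of the left and right endpoints of the hull, and yields the second alternative in \eqref{BiernackiCompact}. For $n=1$ the statement is immediate, since the unique zero lies in $(a,b)\subseteq \D(a,b-a)$, so I assume $n\ge 2$.

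First I would pin down the two ingredients. By statement \ref{theo-funda}.2 all critical points of $L_n$ are real and lie in $\intch{\Delta_0\cup\Delta_1}=(a,d)$; hence their convex hull is a (possibly degenerate) segment $K=[\alpha,\beta]$ with $a<\alpha\le\beta<d$, and its only vertices are $\alpha$ and $\beta$. By Corollary \ref{Cor-UnCero}, $L_n$ has a zero $x_0\in\intch{\sopor{\mu_0}}\subseteq\inter{\Delta_0}=(a,b)$, so $a<x_0<b$. Taking $z_0=x_0$ in Lemma \ref{Lemma-Biernacki}, every zero of $L_n$ lies in $\overline{\D}(\alpha,|\alpha-x_0|)\cup\overline{\D}(\beta,|\beta-x_0|)$, where I write $\overline{\D}$ for the closed disk.

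The core of the argument is then a one-line geometric estimate showing that each of these two closed disks already lies in $D_\Delta$, carried out vertex by vertex according to the sign of $v-x_0$. If a vertex $v\in\{\alpha,\beta\}$ satisfies $v\ge x_0$, then the point of $\overline{\D}(v,v-x_0)$ farthest from $d$ is at distance $(d-v)+(v-x_0)=d-x_0<d-a$, using $v<d$ and $x_0>a$; hence $\overline{\D}(v,v-x_0)\subset \D(d,d-a)$. If instead $v<x_0$, then the point of $\overline{\D}(v,x_0-v)$ farthest from $a$ is at distance $(v-a)+(x_0-v)=x_0-a<b-a$, using $v>a$ and $x_0<b$; hence $\overline{\D}(v,x_0-v)\subset \D(a,b-a)$. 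In either case the disk is contained in $D_\Delta=\D(d,d-a)\cup\D(a,b-a)$, and taking the union over the two vertices gives the inclusion of all zeros into $D_\Delta$.

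I do not anticipate a genuine obstacle: the whole matter reduces to the two displayed inequalities, and the strictness needed for containment in the \emph{open} disks is supplied precisely by the strict inclusions $\{\alpha,\beta\}\subset(a,d)$ and $x_0\in(a,b)$. The only point requiring a little care is that one must not assume $x_0$ lies between $\alpha$ and $\beta$; treating each vertex separately according to whether $v\ge x_0$ or $v<x_0$ makes the estimate uniform and avoids any case analysis on the global position of $x_0$. The case $d\le a$ is handled the same way, with the hull $[c,b]$, critical points in $(c,b)$, and the same reference zero $x_0\in(a,b)$, producing $\D(c,b-c)\cup\D(b,b-a)$.
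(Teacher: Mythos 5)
Your proposal is correct and follows essentially the same route as the paper: the paper's own proof simply invokes Corollary \ref{Cor-UnCero} for $n=1$ and, for $n\geq 2$, combines statement \textit{\ref{theo-funda}.2} with the Biernacki Lemma \ref{Lemma-Biernacki}, exactly as you do. Your explicit vertex-by-vertex distance estimates (and the observation that strictness comes from $x_0\in(a,b)$ and the critical points lying in the \emph{open} hull) merely fill in details the paper leaves to the reader.
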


\begin{proof}
For $n=1$ the statement is certainly true because of Corollary \ref{Cor-UnCero}. For $n \geq 2$ the result follows directly from \textit{\ref{theo-funda}.2} in Theorem \ref{theo-funda} and the Biernacki Lemma \ref{Lemma-Biernacki}.
\end{proof}

\section{Regular asymptotic  distribution of critical points}

A compact set $K$ of the complex plane  is said to be  \emph{regular} if the Green function with singularity at $\infty$ relative to the unbounded connected component of $\CC \setminus K$ can be extended continuously to the boundary. We refer the reader to \cite{Lan72,Ran95} and for short  \cite[Appendix]{StaTo92} for this and other notions related with logarithmic potential theory. For example, the union of a finite number of bounded intervals in the real line form regular compact sets.

Suppose that $\mu$ is a finite positive Borel measure such that $S(\mu)$ is a regular compact set and $1\leq p <\infty$. It is well known (see \cite[Thm. 3.4.3]{StaTo92}) that $\mu \in \reg$  if and only if
\begin{equation}\label{compnormas1}
\lim_{n \to\infty}  \left({\frac{\|Q_{n}\|_{S(\mu)}}{\|Q_{n}\|_{\mu,p}}}\right)^{1/n} = 1\,,
\end{equation}
where $\{Q_{n}\}, n \in \mathbb{Z}_+,$ is any sequence of polynomials such that $\dgr{Q_{n}} = n$  and  $\|\cdot\|_{S(\mu)}$ denotes the usual sup  norm on $S(\mu)$. Using Cauchy's integral theorem for the derivative of a holomorphic function, with the same hypothesis on $S(\mu)$ it is easy to show (see \cite[lemma 3]{LoPi99}) that for all $j \in \ZZp$
\begin{equation}\label{compnormas2}
\limsup_{n \to\infty}\left({\frac{\|Q_n^{(j)}\|_{S(\mu)}}{\|Q_n\|_{S(\mu)}}}\right)^{1/n} \leq 1.
\end{equation}

One last result that we will used is contained in \cite[Thm. 2.1 - Cor. 2.1]{BlSaSi88}. Let $K$ be a compact subset of the real line with $\capa{K}>0$ and let $\{Q_{n}\}$ be a sequence of monic polynomials, $\dgr{Q_{n}} = n$. Then
\begin{equation}\label{weaklim}
 \limsup_{n \to \infty} \|Q_n\|_{K}^{1/n}=  \capa{K} \qquad \Rightarrow  \qquad \wlim_{n \to\infty} \sigma(Q_{n}) =  \mu_K.
\end{equation}

\begin{proof}[Proof of Theorem \ref{ThRegAD}] From \eqref{weaklim}, \eqref{Thm-AsinDer-2} follows from \eqref{Thm-AsinDer-1}. Therefore, let us prove \eqref{Thm-AsinDer-1}.

Let $T_n$  denote the  $n$th monic Chebyshev polynomial with respect to $\Delta$; that is, $\|T_n\|_{\Delta} \leq \|Q_n\|_{\Delta}$ for any monic polynomial $Q_n$ of degree $n$. In particular,
\begin{align*}
\liminf_{n \to\infty}   \|L_{n}^{(j)}\|_{\Delta}^{1/n} & \geq \liminf_{n \to\infty}  \left(\frac{n!}{(n-j)!}\|T_{n-j}\|_{\Delta}\right)^{1/n}\\
& \geq \liminf_{n \to\infty}   \|T_{n-j}\|_{\Delta}^{1/n} = \capa{\Delta},
\end{align*}
since it is well known that $\lim_{n \to \infty} \|T_n\|_{\Delta}^{1/n}=\capa{\Delta}$ (see, for example, \cite[Cor. 5.5.5]{Ran95}).
Hence, we  only need to prove that
\begin{equation}\label{ThRegAD_CPoint-1}
\limsup_{n \to\infty}   \|L_{n}^{(j)}\|_{\Delta}^{1/n} \leq  \capa{\Delta}.
\end{equation}

Since  $\mu_0,\mu_1 \in \reg$ and $\Delta_0,\Delta_1$ are regular compact sets, from  \eqref{compnormas1} and \eqref{compnormas2} we obtain
 \begin{align}
    \limsup_{n \to\infty}   \| L_{n}^{(j)}\|_{\Delta_0}^{p/n}&\leq\limsup_{n \to\infty}   \| L_{n}\|_{\Delta_0}^{p/n} \leq \limsup_{n \to\infty}  \|L_{n}\|_{\mu_0,p}^{p/n} \nonumber \\ &  \leq  \limsup_{n \to\infty}   \|L_{n}\|_{S,p}^{p/n},\label{AsymRootSobNorm-01}
 \end{align}
 and
 \begin{align*}
 \limsup_{n \to\infty}   \| L_{n}^{(j)}\|_{\Delta_1}^{p/n} &\leq\limsup_{n \to\infty}   \| L_{n}^{\prime}\|_{\Delta_1}^{p/n} \leq \limsup_{n \to\infty}  \|L_{n}^{\prime}\|_{\mu_1,p}^{p/n} \\ & \leq \limsup_{n \to\infty}   \|L_{n}\|_{S,p}^{p/n}.
    \end{align*}
The conjunction of these two relations imply that
\begin{equation}
\label{cotanorm}
\limsup_{n \to\infty}   \| L_{n}^{(j)}\|_{\Delta}^{1/n} \leq \limsup_{n \to\infty}   \|L_{n}\|_{S,p}^{1/n}.
\end{equation}

Now, from the extremality of  $L_{n}$ in the Sobolev norm, we get
\begin{align*}
    \|L_{n}\|_{S,p}^p &\leq \|T_{n}\|_{S,p}^p = \|T_n\|_{\mu_0,p}^p+\|T_n^{\prime}\|_{\mu_1,p}^p, \leq \mu_0\left(\Delta_0\right)\, \|T_n\|_{\Delta_0}^p + \mu_1\left(\Delta_1\right)\, \|T_n^{\prime}\|_{\Delta_1}^p\\
  &
  \leq   \mu_0\left(\Delta_0\right)\, \|T_n\|_{\Delta}^p + \mu_1\left(\Delta_1\right)\, \|T_n^{\prime}\|_{\Delta}^p.
    \end{align*}
On the other hand, using again \eqref{compnormas1} and \eqref{compnormas2} it follows that
\begin{align*}
\dsty \limsup_{n \to \infty} \left(\frac{\|L_{n}\|_{S,p}}{\|T_n\|_{\Delta}}\right)^{1/n}\leq  \limsup_{n \to \infty} \left( \mu_0\left(\Delta_0\right) + \mu_1\left(\Delta_1\right)\,\frac{ \|T_n^{\prime}\|_{\Delta}^p}{\|T_n\|_{\Delta}^p}\right)^{ {1}/{pn}}\leq 1,
\end{align*}
whence
\begin{equation}\label{AsymRootSobNorm-02}
 \limsup_{n \to\infty}  \|L_{n}\|_{S,p}^{1/n} \leq  \capa{\Delta}.
\end{equation}
Now, \eqref{cotanorm} and \eqref{AsymRootSobNorm-02} give \eqref{ThRegAD_CPoint-1} and we are done.
\end{proof}

\begin{remark}
If ${\Delta_0}=[a,b]$ and  ${\Delta_1}=[c,d]$ are bounded and non trivial intervals of $\RR$, for the equilibrium measure $\mu_{\Delta}$ in Theorem \ref{ThRegAD}, we have two possibilities:
\begin{itemize}
  \item If $\Delta_0 \cap \Delta_1 \neq \emptyset$,  then $\Delta=[\alpha,\beta]$ (an interval) and $\mu_{\Delta}$ is the arcsine measure
   \begin{equation}\label{EqulMeasure_1}d\mu_{\Delta}= \frac{dx}{\pi \sqrt{|(x-\alpha)(x-\beta)|}}, \quad x \in (\alpha,\beta).\end{equation}
  \item If $\Delta_0 \cap \Delta_1 = \emptyset$ then there exists $x_* \in  \Delta_{g}=\ch{\Delta_0 \cup \Delta_1} \setminus  \left( \Delta_0 \cup \Delta_1\right)$ such that
\begin{equation}\label{EqulMeasure_x_*}
  \int_{\Delta_{g}} \frac{(x-x_*)\,dx}{\sqrt{|(x-a)(x-b)(x-c)(x-d)|}}=0,\;
\end{equation}
and (see \cite[Lemma 4.4.1]{StaTo92})
   \begin{equation}\label{EqulMeasure_2}
  d\mu_{\Delta}= \frac{|x-x_*|\,dx}{\pi \sqrt{|(x-a)(x-b)(x-c)(x-d)|}}, \quad x \in (a,b)\cup(c,d).
\end{equation}

\end{itemize}
\end{remark}

Combining Theorems \ref{theo-funda} and \ref{ThRegAD} we obtain

\begin{theorem}\label{ThDerAsympSqrt_n} Let $\mu_0, \mu_1 \in \reg$ be such that $\Delta_0$ and $\Delta_1$ are bounded and non trivial intervals of $\RR$ satisfying $\inter{\Delta}_0 \cap \inter{\Delta}_1 = \emptyset$. Denote by $\{L_{n}\}$  the sequence of monic extremal polynomials relative to  the corresponding Sobolev norm \eqref{SobolevNorm_p} with $p \in (1,\infty)$. Then, for all $j > 0$
\begin{equation}\label{CoroDerAsympSqrt_n-1}
    \limsup_{n\to \infty}  \left| L_{n}^{(j)}(z)\right|^{1/n}= \capa{\Delta} e^{g_{\Omega}(z;\infty)},\; z \in \CC
\end{equation}
except on a set of zero capacity, where $g_{\Omega}(z;\infty)$  is Green's function for $\Omega = \overline{\CC}\setminus \Delta$ with singularity at infinity (cf. \cite[Appendix V]{StaTo92}).

Moreover, uniformly on compact subset of $\widehat{\Omega}=  \CC \setminus \ch{\Delta_0 \cup \Delta_1}$
\begin{align}\label{CoroDerAsympSqrt_n-2}
    \lim_{n\to \infty}  \left| L_{n}^{(j)}(z)\right|^{1/n} & = \capa{\Delta} e^{g_{\Omega}(z;\infty)},
\end{align}
and
\begin{align}\label{CoroDerAsympSqrt_n-3}
 \lim_{n\to \infty} \frac{L_{n}^{(j+1)}(z)}{n L_{n}^{(j)}(z)}& = \int \frac{d\mu_{\Delta}(x)}{(z-x)},
\end{align}
where $x_*$ is given by \eqref{EqulMeasure_x_*}.
\end{theorem}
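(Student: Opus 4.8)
The plan is to recast the $n$th root asymptotics in terms of logarithmic potential theory and then feed in Theorem~\ref{ThRegAD} together with the location of critical points from Theorem~\ref{theo-funda}. Writing $L_{n}^{(j)}(z)=\frac{n!}{(n-j)!}\prod_{k=1}^{n-j}(z-w_{k,n})$ for the zeros $w_{k,n}$ of $L_{n}^{(j)}$, one has
\[
\frac{1}{n}\log\bigl|L_{n}^{(j)}(z)\bigr|=\frac{1}{n}\log\frac{n!}{(n-j)!}+\frac{n-j}{n}\int\log|z-x|\,d\sigma\bigl(L_{n}^{(j)}\bigr)(x).
\]
Since $\frac{1}{n}\log\frac{n!}{(n-j)!}\to 0$ and $\frac{n-j}{n}\to 1$, the whole problem reduces to the limiting behaviour of the logarithmic potential of the normalized zero counting measure $\sigma(L_{n}^{(j)})$. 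By Theorem~\ref{ThRegAD} we know that $\dsty\wlim_{n\to\infty}\sigma(L_{n}^{(j)})=\mu_{\Delta}$, and the characterization of the equilibrium measure recalled in the Introduction yields the standard identity $\int\log|z-x|\,d\mu_{\Delta}(x)=\log\capa{\Delta}+g_{\Omega}(z;\infty)$; exponentiating produces the target value $\capa{\Delta}\,e^{g_{\Omega}(z;\infty)}$.

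To obtain \eqref{CoroDerAsympSqrt_n-1} I would invoke the lower envelope theorem of potential theory (see \cite{StaTo92}): as the measures $\sigma(L_{n}^{(j)})$ are all supported in the fixed compact set $\Delta$ (with $\capa{\Delta}>0$) and converge weak-$*$ to $\mu_{\Delta}$, the associated potentials satisfy $\limsup_{n}\int\log|z-x|\,d\sigma(L_{n}^{(j)})(x)=\int\log|z-x|\,d\mu_{\Delta}(x)$ for quasi-every $z\in\CC$, which is exactly \eqref{CoroDerAsympSqrt_n-1} after the normalization above. For \eqref{CoroDerAsympSqrt_n-2} the decisive observation is that \emph{all} zeros of $L_{n}^{(j)}$ lie in $\ch{\Delta_{0}\cup\Delta_{1}}$ when $j\geq 1$: by Theorem~\ref{theo-funda}.2 the critical points of $L_{n}$ are contained in $\intch{\Delta_{0}\cup\Delta_{1}}$, and iterating the Gauss--Lucas theorem places the zeros of each higher derivative in the convex hull of the zeros of the preceding one. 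Hence on any compact $K\subset\widehat{\Omega}=\CC\setminus\ch{\Delta_{0}\cup\Delta_{1}}$ the kernel $\log|z-x|$ is continuous and uniformly bounded for $x\in\ch{\Delta_{0}\cup\Delta_{1}}$, so the weak-$*$ convergence upgrades to uniform convergence of the potentials; equivalently, the functions $\frac{1}{n}\log|L_{n}^{(j)}(z)|$ are harmonic and locally uniformly bounded on $\widehat{\Omega}$ and converge pointwise, hence uniformly on compacts, giving \eqref{CoroDerAsympSqrt_n-2}.

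Finally, \eqref{CoroDerAsympSqrt_n-3} comes from the logarithmic derivative: since $L_{n}^{(j+1)}=(L_{n}^{(j)})'$,
\[
\frac{L_{n}^{(j+1)}(z)}{n\,L_{n}^{(j)}(z)}=\frac{1}{n}\sum_{k=1}^{n-j}\frac{1}{z-w_{k,n}}=\frac{n-j}{n}\int\frac{d\sigma(L_{n}^{(j)})(x)}{z-x}.
\]
For $z$ ranging over a compact subset of $\widehat{\Omega}$ the Cauchy kernel $x\mapsto(z-x)^{-1}$ is continuous and bounded on $\ch{\Delta_{0}\cup\Delta_{1}}$, so $\dsty\wlim_{n\to\infty}\sigma(L_{n}^{(j)})=\mu_{\Delta}$ delivers the limit $\int(z-x)^{-1}\,d\mu_{\Delta}(x)$, uniformly on compacts. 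The main obstacle is \eqref{CoroDerAsympSqrt_n-1}: the quasi-everywhere statement genuinely fails at the zeros of the $L_{n}^{(j)}$ and on an exceptional polar set, so it cannot be read off from a naive pointwise limit and really needs the lower envelope theorem, whereas \eqref{CoroDerAsympSqrt_n-2} and \eqref{CoroDerAsympSqrt_n-3} become routine once the inclusion of the zeros of $L_{n}^{(j)}$ in $\ch{\Delta_{0}\cup\Delta_{1}}$ has been secured through Theorem~\ref{theo-funda}.2 and Gauss--Lucas.
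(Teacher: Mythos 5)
Your proposal is correct and follows essentially the same route as the paper: the zeros of $L_n^{(j)}$ are confined to $\ch{\Delta_0\cup\Delta_1}$ via Theorem \ref{theo-funda}.2 (the paper uses Rolle's theorem where you use Gauss--Lucas, an immaterial difference here), the quasi-everywhere statement \eqref{CoroDerAsympSqrt_n-1} comes from the lower envelope theorem applied to $\wlim_n\sigma(L_n^{(j)})=\mu_\Delta$ from Theorem \ref{ThRegAD}, and \eqref{CoroDerAsympSqrt_n-2}--\eqref{CoroDerAsympSqrt_n-3} follow from harmonicity/boundedness of the potentials and the partial-fraction identity exactly as in the paper. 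The only cosmetic deviation is that you identify the locally uniform limit in \eqref{CoroDerAsympSqrt_n-2} directly by integrating the continuous kernel against the weak-$*$ limit, while the paper pins it down through the q.e.\ equality \eqref{CoroDerAsympSqrt_n-1}; both are valid and of the same depth.
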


\begin{remark} To give an explicit expressions for the function on the right side of \eqref{CoroDerAsympSqrt_n-1}-\eqref{CoroDerAsympSqrt_n-2}, we assume without loss  of generality that $\Delta_0=[a,b]$ and $\Delta_1=[c,d]$ with $-\infty< a<b\leq c<d<\infty$. As we show below, there are closed formulas for Green's function $g_{\Omega}(z;\infty)$ and the logarithmic capacity of $\Delta$. When both segments have the same length, $g_{\Omega}(z;\infty)$ and $\capa{\Delta}$ are given by elementary formulas (see \eqref{GreenFun_Capa-2} below). In general, (formulas \eqref{GreenFun-1}-\eqref{Capa2Inter-1}), they can be expressed in terms of theta-functions (see \cite{Rum13,Shi76}) defined by
$$\theta(u,\tau,r,s)= \sum_{k\in \ZZ} e^{2\pi i \left((k+r)^2\frac{\tau}{2}+(k+r)(u+s)\right)}$$
where $u,\tau \in \CC$, $\imm{\tau}>0$ and $r,s \in \RR$. Here, we will be particularly interested in the functions
\begin{equation}\label{theta-function}
 \vartheta(u,\tau)=\theta\left(u,\tau,\frac{1}{2},\frac{1}{2}\right) \; \text{ and }\;   \vartheta_0(\tau)=\theta\left(0,\tau,0,\frac{1}{2}\right).
\end{equation}
Following the procedure for computing the logarithmic capacity of two segments given in \cite[\S 1.3.3]{FaSe01} and \cite[Ch. 2]{Rum13}, let $\Psi(z)$ be the function
$$ \Psi(z)= \sqrt{\frac{(z-a)(d-b)}{(z-b)(d-a)}}, $$
where $\sqrt{z}>0$ for $z>0$, and $\Upsilon(w)$ is the elliptic integral
$$\Upsilon(z)=\int_{0}^{z} \frac{dx}{\sqrt{(1-x^2)(1-\upsilon^2 x^2)}},\; \text{ where }\; \upsilon=\Psi(c)^{-1}.$$
Putting $\dsty \Phi(z)=\Upsilon\left( \Psi(z)\right)$ and $\dsty \tau=\frac{\Phi(c)}{\Phi(d)}$, we obtain
      \begin{align}\label{GreenFun-1}
   g_{\Omega}(z;\infty)& = - \log\left| \frac{\vartheta((2\Phi(d))^{-1}(\Phi(z)-\Phi(\infty)),\tau)}{\vartheta((2\Phi(d))^{-1}(\Phi(z)+\overline{\Phi(\infty)}),\tau)}\right|  \\
\label{Capa2Inter-1}  \text{ and }\; \capa{\Delta} & = \left|\frac{\vartheta_0(\tau)\;\sqrt[4]{(c-a)(c-b)(d-a)(d-b)}}{2 \; \vartheta\left(\Phi^{-1}(d) \ree{\Phi(\infty)},\tau\right)}\right|.
   \end{align}
We recall that  $a<b<c<d$. Hence if $-a=d$ and $-b=c$ for Green's function and the logarithmic capacity, we obtain
      \begin{equation}\label{GreenFun_Capa-2}
   g_{\Omega}(z;\infty)=  \frac{1}{2}\logp\left| \frac{\sqrt{z^2-b^2}+\sqrt{z^2-a^2}}{\sqrt{z^2-b^2}-\sqrt{z^2-a^2}}\right|  \;  \text{ and }\; \capa{\Delta}  = \frac{1}{2} \sqrt{a^2-b^2},
   \end{equation}
where $$\logp\left|z\right| =\left\{\begin{array}{ll} \log|z| & \hbox{ if } |z|>1  \\ 0  &  \hbox{ if } |z|\leq 1\end{array}.\right.$$
\end{remark}

\begin{proof}[Proof of Theorem \ref{ThDerAsympSqrt_n}.]  From Theorem \ref{theo-funda}, we have that
for all $n\geq 2$, the critical points of the extremal polynomial $L_{n}$ are simple and contained in $\intch{\Delta_0 \cup \Delta_1}$. Now, Rolle's theorem implies that the zeros of all derivatives of higher order of  $L_{n}$ lie in the convex hull of the
set of its critical points. Therefore, for all $n\geq 2$ and $j\geq 1$,  the $(n-j)$ zeros $\{x_k^{(j)}\}$ of $L_n^{(j)}$  lie on  $\intch{\Delta_0 \cup \Delta_1}$.  Thus, for each fixed  $j\geq 1$  the measure  $\sigma(L_n^{(j)})$ has its support contained in $\ch{\Delta_0 \cup \Delta_1}$. From the lower envelope theorem  (cf. \cite[Appendix  III]{StaTo92}) and
\eqref{Thm-AsinDer-2}, we get
\begin{equation}\label{ThDerAsympSqrt_n-P1}
  \liminf_{n \to \infty} \int \, \log \frac{1}{|z - x|}\, d\sigma(L_n^{(j)})(x) =
\int \, \, \log \frac{1}{|z - x|}\, d\mu_{\Delta}(x) \;,
\end{equation}
for all $z \in \CC$ except on a set of zero capacity. But, from \cite[Ch.1-(2.3)]{StaTo92}
$$ \int \, \log \frac{1}{|z - x|}\, d\mu_{\Delta}(x)  = \log \frac{1}{\capa{\Delta}}-g_{\Omega}(z;\infty)\;, $$
hence   \eqref{ThDerAsympSqrt_n-P1}  is equivalent to \eqref{CoroDerAsympSqrt_n-1}.

In order to prove \eqref{CoroDerAsympSqrt_n-2}, notice that
for each fixed $j\geq 0$, the family of functions
$$ \left\{ \int \, \log \frac{1}{|z - x|}\, d\sigma(L_n^{(j)})(x) \right\}
\;,\;\; n \in \ZZ_+ \;, $$
is harmonic and uniformly bounded on compact subsets of $\widehat{\Omega}$. From \eqref{CoroDerAsympSqrt_n-1}, any subsequence which converges uniformly on compact subsets of $\widehat{\Omega}$ must tend
to $\int \, \log |z - x|^{-1} \, d\mu_{\Delta}(x)$
(independent of the convergent subsequence chosen). Therefore, the whole sequence converges uniformly on compact subsets of $\widehat{\Omega}$ to this function. This is equivalent to \eqref{CoroDerAsympSqrt_n-2}.

Finally, expanding the rational function in the right side of \eqref{CoroDerAsympSqrt_n-3} in partial fractions, we get
$$\frac{L_{n}^{(j+1)}(z)}{n L_{n}^{(j)}(z)}= \frac{1}{n} \sum_{k=1}^{n-j} \frac{1}{z-x_k^{(j)}}= \frac{n-j}{n}\; \int \frac{d\sigma(L_n^{(j)})(x)}{z-x}.$$
Hence, it is straightforward that for each fixed $j\geq 1$, the sequence of rational functions $\{L_{n}^{(j+1)}(z)/n L_{n}^{(j)}(z)\}$ is uniformly bounded on each compact subset of $\widehat{\Omega}$.

As, all the measures $\sigma(L_n^{(j)}),$ are supported in $\ch{\Delta_0 \cup \Delta_1}$ then for a fixed $z \in \widehat{\Omega}$, the function $f_{z}(x)=(z-x)^{-1}$ is continuous on $\ch{\Delta_0 \cup \Delta_1}$. Therefore, from \eqref{Thm-AsinDer-2}, we find that any subsequence of  $\{L_{n}^{(j+1)}(z)/n L_{n}^{(j)}(z)\}$ which converges uniformly on compact subsets of $\widehat{\Omega}$ converges pointwise to
\begin{equation}\label{Right_01}
    \int \frac{d\mu_{\Delta}(x)}{z-x}.
\end{equation}
 Thus, the whole sequence converges uniformly on compact subsets of $\widehat{\Omega}$ to this function as stated in \eqref{CoroDerAsympSqrt_n-3}.  Substituting \eqref{EqulMeasure_2} into \eqref{Right_01}, we obtain the expression in the right side of \eqref{CoroDerAsympSqrt_n-3}.
\end{proof}

To discuss the zeros of Sobolev extremal polynomials $L_n$,   we need to introduce some notation. Recall that  ${\Omega}=\overline{\CC} \setminus \Delta$ and that in the case we are now considering $g_{\Omega}(z,\infty)$ is given by \eqref{GreenFun-1}. For $\rho>0$, let $G_{\rho}$ be the set made up of all the connected components of  $\{z  \in \CC: g_{\Omega}(z,\infty) < \rho \}$ which are disjoint from $\Delta_0$. Set $\dsty G=\bigcup_{\rho >0} G_{\rho}$ and
\[\Gamma=\partial G \cup \Delta_0,\]
where $\partial G $ is the boundary of $G$. In the sequel, $[\mu]_{\Gamma}$ denotes the balayage of a measure $\mu$ onto $\Gamma$.  See \cite[Appendix  A:VII]{StaTo92} for a brief introduction to the notion of balayage of  a measure and for more details we refer  to \cite[Ch. IV]{Lan72}.

\begin{theorem}\label{BalayageTh01} Under the hypotheses of Theorem \ref{ThDerAsympSqrt_n}, let $\sigma$ be a limit  of a subsequence of $\{\sigma{(L_n)}\}$ in the  sense of the weak star  topology of measures. Then   $S(\sigma) \subset \Delta \cup \overline{G}$  and  $[\sigma]_{\Gamma}=[\mu_{\Delta}]_{\Gamma}$.
\end{theorem}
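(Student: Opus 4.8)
The plan is to show that any weak-star limit $\sigma$ of a subsequence $\{\sigma(L_{n_k})\}$ (such limits exist because, by Corollary~\ref{Theo_ZerCrit}, the zeros of every $L_n$ lie in the fixed compact set $\overline{D_\Delta}$, so $\{\sigma(L_n)\}$ is tight) is forced to satisfy two properties, from which the theorem follows at once. Writing $U^{\nu}(z)=\int\log\frac{1}{|z-x|}\,d\nu(x)$ and letting $D_{\infty}$ be the unbounded component of $\CC\setminus\Gamma$, the two properties are the \emph{support inclusion} $S(\sigma)\subset\Delta\cup\overline{G}$ and the \emph{exterior identity} $U^{\sigma}(z)=U^{\mu_{\Delta}}(z)$ for $z\in D_{\infty}$. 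Granting these, note that $\sigma$ and $\mu_{\Delta}$ are probability measures supported in $\Delta_0\cup\overline{G}$ (recall $\Delta_1\subset\overline{G}$), that sweeping a measure out of $G$ onto $\partial G$ leaves its potential unchanged on the complement of $G$, hence on $D_{\infty}$, and that a measure carried by $\Gamma$ is determined by its potential on $D_{\infty}$ (balayage uniqueness). Therefore $U^{[\sigma]_{\Gamma}}=U^{\sigma}=U^{\mu_{\Delta}}=U^{[\mu_{\Delta}]_{\Gamma}}$ on $D_{\infty}$, which yields $[\sigma]_{\Gamma}=[\mu_{\Delta}]_{\Gamma}$.

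To obtain the exterior identity first on the far region, I would argue on $\CC\setminus\overline{D_\Delta}$, where no $L_n$ has zeros and which is contained in $\widehat{\Omega}=\CC\setminus\ch{\Delta_0\cup\Delta_1}$. Uniformly on compact subsets there, three limits hold: $\frac{1}{n}\log|L_{n}(z)|\to -U^{\sigma}(z)$ along the subsequence (integrating the continuous kernel $\log|z-x|$ against $\sigma(L_{n_k})$); $\frac{1}{n}\log|L_{n}'(z)|\to \log\capa{\Delta}+g_{\Omega}(z;\infty)=-U^{\mu_{\Delta}}(z)$ by \eqref{CoroDerAsympSqrt_n-2} with $j=1$; and $\frac{L_{n}'(z)}{nL_{n}(z)}=\int\frac{d\sigma(L_n)(x)}{z-x}\to C(z):=\int\frac{d\sigma(x)}{z-x}$ (again a continuous kernel). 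Hence, wherever $C(z)\neq 0$,
\[
\frac{1}{n}\log\left|\frac{L_{n}'(z)}{L_{n}(z)}\right|=\frac{\log n}{n}+\frac{1}{n}\log\left|\int\frac{d\sigma(L_n)(x)}{z-x}\right|\longrightarrow 0,
\]
so $-U^{\mu_{\Delta}}(z)+U^{\sigma}(z)=0$. Since $C$ is analytic and $\not\equiv 0$ on $\CC\setminus\overline{D_\Delta}$, its zeros are isolated, and continuity of the potentials gives $U^{\sigma}(z)=U^{\mu_{\Delta}}(z)$ on all of $\CC\setminus\overline{D_\Delta}$.

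Next I would propagate this equality over $D_{\infty}$. Assuming the support inclusion, the set $\Delta\cup\overline{G}$ is disjoint from $D_{\infty}$, so $U^{\sigma}-U^{\mu_{\Delta}}$ is harmonic on the connected open set $D_{\infty}$. One checks that $\Delta_0\cup\overline{G}\subset\overline{D_\Delta}$, whence the nonempty open set $\CC\setminus\overline{D_\Delta}$ is contained in $D_{\infty}$; there the harmonic difference vanishes by the previous step, so by the identity principle it vanishes throughout $D_{\infty}$. This is exactly the exterior identity, completing the reduction.

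The main obstacle is the support inclusion $S(\sigma)\subset\Delta\cup\overline{G}$. By Theorem~\ref{theo-funda} the sign-change zeros of $L_n$ in $\inter{\Delta_0}$ are simple and interlace the critical points (parts \ref{theo-funda}.1 and \ref{theo-funda}.4), and at most one zero falls in the gap (part \ref{theo-funda}.3); thus the real zeros accumulate only on $\Delta_0$, and the lone possible gap zero contributes nothing to $\sigma$. The remaining zeros are nonreal (occurring in conjugate pairs) and attached to the $\Delta_1$-cluster, and the task is to confine their limit to the Green droplet $\overline{G}$, i.e. to show they do not escape across the saddle $x_*$ of $g_{\Omega}(\cdot;\infty)$ in the gap. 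I would do this by showing, via the strong asymptotics \eqref{CoroDerAsympSqrt_n-3} on $\widehat{\Omega}\supset\{g_{\Omega}>\rho_0\}$ (with $\rho_0=g_{\Omega}(x_*;\infty)$ the saddle level that defines $\partial G$), that $\frac{1}{n}\log|L_n|$ is asymptotically harmonic and equal to $\log\frac{1}{\capa{\Delta}}-g_{\Omega}$ on $\{g_{\Omega}>\rho_0\}$, so that $\sigma$ charges no point with $g_{\Omega}>\rho_0$; the structural facts \ref{theo-funda}.3--\ref{theo-funda}.4 then prevent the nonreal cluster from crossing the gap, pinning its outer boundary exactly at the level $\rho_0$, i.e. at $\partial G$. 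Matching the complex-zero droplet precisely to the sublevel component $\overline{G}$ is the delicate point; once it is secured, the potential-theoretic steps above are routine.
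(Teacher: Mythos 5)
Your reduction of the balayage identity to two facts (the support inclusion $S(\sigma)\subset\Delta\cup\overline{G}$ and the equality $U^{\sigma}=U^{\mu_{\Delta}}$ near infinity) is sound, and your derivation of the exterior identity on $\CC\setminus\overline{D_{\Delta}}$ via the logarithmic derivative $L_n'/(nL_n)$ is correct and even elegant. But the heart of the theorem is precisely the support inclusion, and your proposal does not prove it --- as you concede when you call the matching of the zero cluster to $\overline{G}$ ``the delicate point.'' Neither of the tools you invoke can deliver it. First, \eqref{CoroDerAsympSqrt_n-3} concerns the ratios $L_n^{(j+1)}/(nL_n^{(j)})$ only for $j\geq 1$, so it never involves $L_n$ itself; and to show that $\frac{1}{n}\log|L_n|$ converges to $\log\capa{\Delta}+g_{\Omega}$ on $\{g_{\Omega}>\rho_0\}$ you would already need an a priori upper bound on $|L_n(z)|^{1/n}$ there: inside $\overline{D_{\Delta}}$ zeros may a priori be present, so the continuous-kernel/weak-star argument you used outside $\overline{D_{\Delta}}$ is unavailable, and the argument becomes circular. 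Second, parts \ref{theo-funda}.3--\ref{theo-funda}.4 constrain only \emph{real} zeros and critical points; they say nothing about where conjugate pairs of nonreal zeros may accumulate. In particular they cannot exclude accumulation at points just off $\Delta_0$, which lie in the component of $\{g_{\Omega}<\rho_0\}$ containing $\Delta_0$ and hence outside both $\{g_{\Omega}>\rho_0\}$ and $\Delta\cup\overline{G}$ --- a region your plan never reaches at all.

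The missing ingredient, and the actual content of the paper's (admittedly terse) proof, is the pointwise Bernstein--Walsh-type bound \eqref{LimSupSqrtPoly-1}: $\limsup_n|L_n(z)|^{1/n}\leq\capa{\Delta}\,e^{g_{\Omega}(z,\infty)}$ for \emph{every} $z\in\CC\setminus G$. The paper obtains it by adapting Lemma 8 of Gautschi--Kuijlaars: if $z\notin G$, then by the very definition of $G$ the component of $\{g_{\Omega}<g_{\Omega}(z)+\epsilon\}$ containing $z$ meets $\Delta_0$, so there is a path joining $z$ to a point $x_0\in\Delta_0$ along which $g_{\Omega}\leq g_{\Omega}(z)+\epsilon$; writing $L_n(z)=L_n(x_0)+\int_{x_0}^{z}L_n'(t)\,dt$ along that path, one bounds $|L_n(x_0)|\leq\|L_n\|_{\Delta_0}$ through \eqref{compnormas1}, \eqref{AsymRootSobNorm-01} and \eqref{AsymRootSobNorm-02} (this is where regularity of $\mu_0$ and $\|L_n\|_{\mu_0,p}\leq\|L_n\|_{S,p}$ enter), and $|L_n'(t)|\leq\|L_n'\|_{\Delta}\,e^{(n-1)g_{\Omega}(t)}$ via the Bernstein--Walsh inequality together with \eqref{Thm-AsinDer-1}. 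This inequality is exactly the hypothesis needed to apply Theorem 5 of Gautschi--Kuijlaars, which yields both $S(\sigma)\subset\Delta\cup\overline{G}$ and $[\sigma]_{\Gamma}=[\mu_{\Delta}]_{\Gamma}$ (equivalently: \eqref{LimSupSqrtPoly-1} plus the lower envelope theorem gives $U^{\sigma}\geq U^{\mu_{\Delta}}$ q.e.\ on $\CC\setminus G$, and the minimum principle applied to the superharmonic function $U^{\sigma}-U^{\mu_{\Delta}}$, which vanishes at infinity, forces it to vanish identically on the connected open set $\CC\setminus(\Delta\cup\overline{G})$, so $\sigma$ has no mass there). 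Without \eqref{LimSupSqrtPoly-1} or an equivalent estimate valid on all of $\CC\setminus G$ --- not merely on $\{g_{\Omega}>\rho_0\}$ --- both your support inclusion and, consequently, the balayage identity remain unproven.
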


\begin{proof} The proof of this result is similar to that of  \cite[Theorem 2]{GaKu97} with $p=2$. The main tool in the proof of \cite[Theorem 2]{GaKu97} is \cite[Theorem 5 ]{GaKu97} on the distribution of zeros of certain family of  weighted polynomials. For the application of \cite[Theorem 5 ]{GaKu97} it is necessary to proof that  if $z \in \CC \setminus G$ then
\begin{equation}\label{LimSupSqrtPoly-1}
   \limsup_{n\to \infty} {|L_n(z)|^{1/n}}\leq \capa{\Delta} \, e^{g_{\Omega}(z,\infty)}.
\end{equation}
Replacing in the proof of \cite[Lemma 8]{GaKu97} the expressions \cite[(4.1)-(4.2)]{GaKu97} by \ref{Thm-AsinDer-1}, \ref{AsymRootSobNorm-01} and \ref{AsymRootSobNorm-02}, it is straightforward  to deduce  \ref{LimSupSqrtPoly-1}.
 \end{proof}

It is to be expected (and numerical experiments seem to indicate) that the accumulation points of the zeros of the polynomials $L_n$ draw $\Gamma$.

\end{document}